\newcommand{\g}{\mathfrak{g}}
\newcommand{\h}{\mathfrak{h}}
\newcommand{\Gr}{\mathbb{G}}
\newcommand{\co}{\mathbb{C}}
\newcommand{\Or}{\mathcal{O}}
\newcommand{\rk}{\mathrm{rank}\,}
\newcommand{\coker}{\mathrm{coker}\,}
\newcommand{\bor}{\mathfrak{b}}
\newcommand{\Sp}{\mathfrak{sp}(4)}
\renewcommand{\sl}{\mathfrak{sl}(3)}
\newcommand{\Clif}{\mathrm{Cliff}(\g\oplus \g^\vee, ev)}
\title{Equations of some wonderful compactifications}
\author{Pascal Hivert\thanks{Laboratoire de Math\'ematiques de Versailles, Universit\'e de Versailles--Saint-Quentin, email: pascal.hivert@math.uvsq.fr } }
\begin{document}

\maketitle

\begin{abstract}
De Concini and Procesi have defined in \cite{CP} the wonderful compactification $\bar{X}$ of a symmetric space $X=G/G^\sigma$ where $G$ is a semisimple adjoint group and $G^\sigma$ the subgroup of fixed points of $G$ by an involution $\sigma$. It is a closed subvariety of a grassmannian of the Lie algebra $\mathfrak{g}$ of $G$. In this paper, we prove that, when the rank of $X$ is equal to the rank of $G$, the variety is defined by linear equations. The set of equations expresses the fact that the invariant alternate trilinear form $w$ on $\g$ vanishes on the $-1$-eigenspace of $\sigma$.   
\end{abstract}
\bigskip

\section{Introduction}

Throughout this paper, the Lie algebras, the vector spaces and the projective spaces are defined over the complex field $\co$. Let $\g$ be a semisimple Lie algebra with adjoint group $G$, and $\kappa$ be a Killing form on $\g$. The trilinear alternate form $w\, : \, (x,y,z)\mapsto \kappa([x,y],z)$ is invariant under the adjoint action: it is an element of $(\bigwedge^3 \g^\vee)^G$. We put $g=\dim \g$, $l= \rk \g$ and $d=\frac{g+l}2$.

Let $\sigma$ be an involution of $G$, and $H=G^\sigma$ be the closed subgroup consisting of fixed points by $\sigma$.  The \textit{rank of the symmetric space $X=G/H$} is the maximal dimension of the $-1$-eigenspace of $\sigma$ acting on a $\sigma$-invariant Cartan subalgebra of $\g$ ($\sigma$ induces an involution on the Lie algebra $\g$, denoted again by $\sigma$, moreover this involution preserves Killing forms on $\g$).

\medskip

In Section 6 of \cite{CP}, the \textit{minimal wonderful compactification} $\bar{X}$ of $X$ is defined as the closure in the grassmannian $\Gr( \dim \g^\sigma , \g)$ of the $G$-orbit of the point $\g^\sigma$, the Lie algebra of $G^\sigma$.
The action of $G$ in $\bar{X}$ has the following properties.
\begin{enumerate}
\item[1.] The variety $\bar{X}$ is a union of finitely number of $G$-orbits.
\item[2.] The set $\bar{X}\smallsetminus G\cdot \g^\sigma$ is an union of $r$ hypersurfaces $S_i$, $i\in\{1,\ldots,r\}$.
\item[3.] The orbit closures are the intersections $S_J =S_{i_1} \cap \dots \cap S_{i_k}$ where $J=\{i_1,\ldots ,i_k\}$ is a subset of $\{1,\dots ,r\}$.  
\item[4.] $S_{J_1} \cap S_{J_2}= S_{J_1\cup J_2}$ and $\mathrm{codim}\,S_J = \sharp J$.
\end{enumerate}

\textit{Remark}. The integer $r$ is equal to the rank of $X$.

\medskip

We may ask how to define a set of equations of $\bar{X}$ in $\Gr( \dim \g^\sigma , \g)$: we do not know any reference to this question in the literature. In this paper, we give an answer when the rank of $X$ is equal to $l$. 
\newtheorem{thm}{Theorem}
\begin{thm} 
If the rank of $X$ is equal to $l$, $\bar{X}$ is defined in $\Gr(\frac{g-l}{2},\g)$ by linear equations.
\end{thm}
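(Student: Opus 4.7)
The plan is to build the linear equations directly from the invariant trilinear form $w$ and to verify they cut out $\bar X$. The rank hypothesis gives $\dim \g^+ = (g-l)/2$ and $\dim \g^- = (g+l)/2 =: d$, and, since $\sigma$ preserves $\kappa$, one has $\g^- = (\g^+)^\perp$. To linearise the condition ``$w$ vanishes on $V^\perp$'' in Plücker coordinates, introduce the $G$-equivariant contraction
\[
\iota_w\colon \wedge^d \g \longrightarrow \wedge^{d-3} \g,\qquad v_1\wedge\cdots\wedge v_d\ \longmapsto\ \sum_{i<j<k}(-1)^{i+j+k-1}\,w(v_i,v_j,v_k)\,v_1\wedge\cdots\widehat{v_i}\cdots\widehat{v_j}\cdots\widehat{v_k}\cdots\wedge v_d.
\]
For a decomposable $\omega=v_1\wedge\cdots\wedge v_d$ representing $U\in\Gr(d,\g)$, linear independence of the $(d-3)$-fold sub-wedges of the $v_i$ forces $\iota_w(\omega)=0$ iff $w|_U=0$. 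Pulled back through the Killing-form duality $V\mapsto V^\perp$, the kernel of $\iota_w$ cuts out a closed subvariety $Y\subset\Gr((g-l)/2,\g)$ by linear equations in its Plücker embedding.

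The inclusion $\bar X\subset Y$ is immediate: for $x,y\in\g^-$, the bracket $[x,y]$ lies in $\g^+$ since $\sigma$ is an involutive automorphism, so $w(x,y,z)=\kappa([x,y],z)=0$ for every $z\in\g^-$, placing $\g^+$ in $Y$. By $G$-invariance of $w$, the variety $Y$ is $G$-stable, hence contains $G\cdot\g^+$ and therefore $\bar X$.

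The reverse inclusion $Y\subset\bar X$ is the heart of the theorem and the step I expect to be the main obstacle. The defining condition reads $[V^\perp,V^\perp]\subset V$, which is only one of the three relations characterising a symmetric decomposition and does not by itself force $V$ to be a Lie subalgebra. On the dense open locus of $Y$ where $\kappa|_V$ is non-degenerate, so that $V\oplus V^\perp=\g$, I would try to upgrade this single relation to a full symmetric decomposition, producing an involution of $\g$ whose $+1$-eigenspace is $V$ and which is therefore $G$-conjugate to $\sigma$; such $V$ then lies in the open orbit $G\cdot\g^+\subset\bar X$. For the remaining points of $Y$, where $\kappa|_V$ degenerates, I plan a $T$-fixed-point analysis for a maximal torus $T\subset G$: the $T$-fixed points of the grassmannian are coordinate subspaces encoded by root-system data, and one must verify via the bracket condition that those lying in $Y$ match one-for-one the $T$-fixed points of the boundary divisors $S_J$ described in \cite{CP}. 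A complementary, more algebraic route would be to decompose $\wedge^d\g$ as a $G$-module, identify the image of $\iota_w$ among its irreducible summands, and compare the resulting linear equations with the degree-one piece of the Plücker ideal of $\bar X$.
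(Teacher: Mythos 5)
Your construction of the linear equations via the contraction $\iota_w\colon\bigwedge^d\g\to\bigwedge^{d-3}\g$ and the Killing-form duality $V\mapsto V^\perp$ is exactly the right idea, and it matches the paper's morphism $^t w_1\colon\bigwedge^3 K\to\Or_\Gr$ whose twist $\bigwedge^3 K(1)\simeq\bigwedge^{d-3}K^\vee$ is globally generated. The easy inclusion $\bar X\subset Y$ is also handled correctly. However, the inclusion $Y\subset\bar X$ and the passage from a set-theoretic to a scheme-theoretic identification are exactly where the real work lies, and your sketch has genuine gaps at both steps.

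For the open locus, you propose to ``upgrade'' the single relation $[V^\perp,V^\perp]\subset V$ to a symmetric decomposition by assuming $\kappa|_V$ nondegenerate. This is not a routine linear-algebra upgrade: nondegeneracy gives you $\g=V\oplus V^\perp$, but it does not, by itself, force $[V,V]\subset V$ or $[V,V^\perp]\subset V^\perp$. The paper's route is structurally different. It proves first (Proposition~\ref{semsimple} and Corollary~\ref{csgofmax}, via a degeneration $V_0=\lim_{t\to 0}\mu(t)\cdot V$ under a regular one-parameter subgroup and an induction on $\sup_\h\dim V\cap\h$) that every maximal nullspace contains a Cartan subalgebra, and in fact decomposes as $\h\oplus\bigoplus_{\alpha\in\Phi^+}L_\alpha$ with $L_\alpha$ a line in $\g_\alpha\oplus\g_{-\alpha}$ (Lemma~\ref{decomp}). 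This is the engine: from it one reads off that $Y$ has finitely many $G$-orbits indexed by parabolic subalgebras, that there is a unique open orbit $G\cdot(\h\oplus\sum_{\alpha>0}\co(x_\alpha+x_{-\alpha}))$, and that this open orbit is exactly $G\cdot\g_{-1}$. Your proposal does not supply an analogue of this structural lemma.

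For the boundary, matching $T$-fixed points of $Y$ with those of the $S_J$ is strictly weaker than matching the varieties; two distinct $G$-stable closed subvarieties can have the same $T$-fixed locus. The paper instead identifies boundary orbits via the parabolic $\mathfrak{p}_V=V+[V,V]$ (Proposition~\ref{parabV}), which gives a one-to-one correspondence with subsets of simple roots and hence with the $S_J$. Finally, and most importantly for the stated theorem, you never address whether the scheme $Y$ cut out by these linear equations is reduced. If $Y$ were set-theoretically but not scheme-theoretically equal to $\bar X$, the conclusion ``$\bar X$ is defined by linear equations'' would fail. The paper settles this by proving $Y$ is smooth along the closed orbit of Borel subalgebras (Theorem~\ref{smooth}), which requires the corank estimate of Lemma~\ref{corank} for the Koszul-type map $\bigwedge^3\bor\to\bor\otimes[\bor,\bor]$. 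That smoothness argument, together with the orbit classification, is the part of the proof your sketch omits and would need to supply.
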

 
Let us give a sketch of the proof. We assume in this paper that $\rk X =l$. 

\newtheorem{defn}{Definition} \label{defmax}
\begin{defn} Let $W$ be a vector subspace of $\g$.
\begin{enumerate}
\item[(1)] The subspace $W$ is a nullspace for $(\g,w)$ if $w$ vanishes on $W\times W\times W$.
\item[(2)] The subspace $W$ is a maximal nullspace for $(\g,w)$ if it has maximal dimension for property (1).
\end{enumerate}
\end{defn}
We call $Y$ the set of all maximal nullspaces. This a closed subset of a grassmannian $\Gr(d',\g)$, where $d'$ is the dimension of maximal nullspaces for $(\g,w)$.
\\
For an involution $\sigma$ of $\g$, the direct sum $\g=\g^\sigma \oplus \g_{-1}$ where $\g_{-1}$ is the $-1$-eigenspace is orthogonal with respect to $\kappa$; moreover the subspace $\g_{-1}$ is a nullspace for $(\g,w)$. Any Borel subalgebra satisfies the condition (1) of Definition \ref{defmax}, so the maximal dimension is greater than or equal to $d:=\frac{g+l}{2}$. 

\medskip

We first prove that any maximal nullspace contains a Cartan subalgebra of $\g$, and we deduce from this fact that $d'=d$. If $W$ is a maximal nullspace which contains a Cartan subalgebra $\h$, let $\Phi$ be the root system of $(\g,\h)$. We prove that for any $\alpha\in\Phi$, the vector space $\co x_\alpha \oplus \co x_{-\alpha}$, generated by a root vector of $\pm \alpha$ meets $W$ along a line.  We deduce that the orbits of $Y$ under $G$ are the same as the parabolic subalgebras of $\g$ (the corresponding parabolic subalgebra of $W$ is $\mathfrak{p}=W+[W,W]$). The closed orbit consists of Borel subalgebras, and to prove the smoothness of $Y$, we analyze its tangent space over this orbit. This description corresponds to the wonderful compactification by the map $W\mapsto W^\bot$. We finish this paper with examples when $l=2$: $\mathfrak{sl}(3)$ and $\mathfrak{sp}(4)$.
\\
For classical simple Lie algebras, one knows a birational description of those wonderful compactifications. We summarize it in the next table.
\[
\begin{array}{c|c}
 \mathfrak{gl}(n) & \mathrm{Complete\, quadrics} \\
 \hline \\
 \mathfrak{sp}(2n) & \mathrm{Hilb}_2 \left(\mathrm{IG}(n,2n)\right) \\
 \hline \\
 \mathfrak{so}(2n) & \Gr(n,2n)/\sim \\
 \hline \\
 \mathfrak{so}(2n+1) & \Gr(n,2n+1)
\end{array}
\]  
For the second line, this is the Hilbert variety of two points on the isotropic grassmannian. For the third line, the equivalence $\sim $ identifies a subspace and its orthogonal. 

\section{Maximal nullspaces for $(\g,w)$} \label{section1}

We follow the above-mentioned sketch.

\newtheorem{prop}[thm]{Proposition} 
\begin{prop} \label{semsimple}
Every maximal nullspace contains a regular semisimple element.
\end{prop}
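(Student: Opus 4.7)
The plan is to prove the proposition in two steps: first, exhibit an element of $W$ that is not $\ad$-nilpotent, and then upgrade this to a regular semisimple element.

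The first step uses a Killing-form/dimension argument. Any Borel subalgebra $\bor$ is a nullspace of dimension $d=(g+l)/2$, since $[\bor,\bor]$ equals the nilradical, which coincides with $\bor^\perp$. Hence the maximal nullspace dimension $d'$ satisfies $d'\geq d>g/2$ (using $l\geq 1$). If every element $x\in W$ were $\ad$-nilpotent, then $\kappa(x,x)=\mathrm{tr}(\ad(x)^2)=0$ for all $x\in W$; by polarization $\kappa$ would vanish identically on $W\times W$, giving $W\subseteq W^\perp$ and $\dim W\leq g/2$, a contradiction. So $W$ contains an element $x_0$ with $\kappa(x_0,x_0)\neq 0$, which is in particular not $\ad$-nilpotent.

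The second step aims to show that the restriction of the Chevalley adjoint quotient $\chi\colon\g\to\g/\!/G$ to $W$ is dominant. This suffices, because the regular semisimple locus is the preimage under $\chi$ of the complement of the discriminant divisor, so dominance of $\chi|_W$ forces the generic point of $W$ to be regular semisimple. The element $x_0$ ensures $\chi(W)\neq\{0\}$. To promote this to full dominance, one can exploit the maximality of $W$, which is equivalent to the identity $[W,W]=W^\perp$: indeed $W+\co v$ remains a nullspace if and only if $v\in[W,W]^\perp$ (by $\ad$-invariance of $\kappa$), so maximality forces $[W,W]^\perp=W$. Combined with the rank bound $\dim(W\cap Z_\g(x))\geq 2\dim W-g\geq l$ for every $x\in W$, obtained from the map $\ad(x)\colon W\to W^\perp$, the hope is to extract $l$ algebraically independent invariants along $W$ and conclude $\dim\chi(W)=l$.

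The main obstacle is precisely this dominance step. When $W=\bor$ is a Borel, $\chi|_\bor$ factors through the projection $\bor\twoheadrightarrow\bor/[\bor,\bor]\cong\h$ and is visibly dominant; but a general maximal nullspace need not be a Lie subalgebra, and the discriminant hypersurface of $\g$ is large enough that the inclusion $W\subseteq\{\Delta=0\}$ cannot be excluded on dimension grounds alone. I expect the cleanest argument to combine the root-space decomposition of $\g$ relative to a fixed Cartan with the identity $[W,W]=W^\perp$ so as to construct a semisimple element of $W$ directly, or alternatively to use a deformation argument in the Grassmannian $Y$, propagating the open condition ``$W$ contains a regular semisimple element'' from the closed orbit of Borels to the rest of $Y$.
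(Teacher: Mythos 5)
Your first step is correct and a worthwhile observation: since $\dim W \geq d = (g+l)/2 > g/2$, the subspace $W$ cannot be $\kappa$-isotropic, so it cannot consist entirely of $\ad$-nilpotent elements; the identity $[W,W]=W^\perp$ for a maximal nullspace is also right. But as you yourself flag, the essential step — dominance of the adjoint quotient restricted to $W$, i.e. that $W$ is not contained in the discriminant hypersurface — is not established, and nothing in the proposal closes that gap. A single $\kappa$-anisotropic element is far from regular semisimple (it need not even be semisimple), and the bound $\dim(W\cap\mathfrak{c}(x))\geq l$ for all $x\in W$ does not by itself produce $l$ algebraically independent invariants along $W$: the phrase ``the hope is to extract $l$ algebraically independent invariants'' is essentially a restatement of the proposition, not a lemma you can invoke.

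The paper's argument is genuinely different and does close the gap. It proceeds by decreasing induction on $r=\sup_{\h}\dim(V\cap\h)$, the supremum over Cartan subalgebras. One first proves an openness lemma: the set of $d'$-planes containing a regular semisimple element is open in the Grassmannian (the image, under the flat projection from the tautological bundle, of the preimage of the regular semisimple locus), so if a one-parameter-subgroup limit $V_0=\lim_{t\to 0}\mu(t)\cdot V$ contains a regular semisimple element then so does $V$. Then, fixing a Cartan $\h$ achieving $r$ and a regular one-parameter subgroup $\mu$ of the corresponding torus, $V_0$ is $\h$-stable, hence a sum of $V_0\cap\h$ and root lines $\co x_\alpha$, $\alpha\in S$. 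If $S\cap(-S)=\varnothing$, then $\sharp S\leq (g-l)/2$, so $\dim(V_0\cap\h)\geq l$ and $\h\subset V_0$, done. If $\alpha\in S\cap(-S)$, the nullspace condition $w(x_\alpha,x_{-\alpha},\cdot)=0$ on $V_0$ forces $V_0\cap\h\subset\ker\alpha$, and then $V_0\cap\h\oplus\co(x_\alpha+x_{-\alpha})$ is an abelian space of semisimple elements contained in a Cartan subalgebra meeting $V_0$ in dimension $>r$, so the inductive hypothesis applies to $V_0$, and the openness lemma transfers the conclusion to $V$. Your closing remark about ``a deformation argument\ldots propagating the open condition'' points in the right direction — that openness is exactly the tool — but propagating an open condition needs a concrete degeneration to anchor it, and it is the induction on $r$ together with the regular one-parameter subgroup that supplies it.
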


\textit{Remark}. Let $T$ be a maximal torus of $G$, and $\mu$ a one-parameter subgroup of $T$. We say that $\mu$ is \textit{regular} if all $\mu$-stable vector space $W$ is $T$-stable. In particular, if $\h$ is the Lie algebra of $T$, $W$ is $\h$-stable. See \cite{DKK} for more details. 

\medskip

Let $V$ be a maximal nullspace for $(\g,w)$ and recall that $\dim V=d' \geq d$. Take $\mu$ a regular one-parameter subgroup and let $V_0=\lim_{t\rightarrow 0} \mu(t)\cdot V$. The vector space $V_0$ is $\mu$-stable, so $\h$-stable, maximal for $(\g,w)$.

\newtheorem{lem}[thm]{Lemma}
\begin{lem} \label{semisimplelimit}
If $V_0$ contains a regular semisimple element, then so does $V$.
\end{lem}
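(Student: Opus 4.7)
The plan is a semicontinuity argument combined with the $G$-equivariance of the condition under consideration. Let $\g_{\mathrm{rs}} \subset \g$ denote the open subset of regular semisimple elements, and define
\[ U = \{ W \in \Gr(d',\g) : W \cap \g_{\mathrm{rs}} \neq \emptyset \}. \]
The hypothesis reads $V_0 \in U$ and the conclusion reads $V \in U$, so it suffices to check (a) that $U$ is Zariski open in the Grassmannian and (b) that $U$ is stable under the adjoint action of the one-parameter subgroup $\mu$.

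For (a), I would use the standard trick with the tautological incidence variety $Z = \{(W,x) : x \in W\} \subset \Gr(d',\g) \times \g$ and its first projection $\pi : Z \to \Gr(d',\g)$, which is proper (it is a projective bundle). Since $\g \smallsetminus \g_{\mathrm{rs}}$ is Zariski closed in $\g$, the subset $Z \cap (\Gr(d',\g) \times (\g \smallsetminus \g_{\mathrm{rs}}))$ is closed in $Z$, and its image under $\pi$ is exactly $\Gr(d',\g) \smallsetminus U$. Properness of $\pi$ makes this image closed, so $U$ is open.

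For (b), the one-parameter subgroup $\mu$ acts on $\g$ through the adjoint representation, and $\mathrm{Ad}(\mu(t))$ is an algebra automorphism; in particular it preserves semisimplicity and the dimension of centralizers, so $\mathrm{Ad}(\mu(t))(\g_{\mathrm{rs}}) = \g_{\mathrm{rs}}$. Therefore $W \in U$ if and only if $\mu(t) \cdot W \in U$ for every $t \neq 0$. Combining (a) and (b): since $\mu(t) \cdot V \to V_0 \in U$ in the Grassmannian and $U$ is open, we have $\mu(t) \cdot V \in U$ for all sufficiently small nonzero $t$, which by (b) forces $V \in U$.

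I do not anticipate any serious obstacle: the whole argument is formal, the only ingredient with any content being the openness of $U$, which is immediate from the properness of the tautological projection. The lemma is really just the statement that the property ``contains a regular semisimple element'' is $G$-invariant and Zariski open on the Grassmannian, hence is inherited from the limit $V_0$ by the generic fiber $\mu(t) \cdot V$, and thus by $V$ itself.
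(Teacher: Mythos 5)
Your overall strategy---define $U=\{W\in\Gr(d',\g):W\cap\g_{rs}\neq\emptyset\}$, show $U$ is open and $\mu$-stable, and conclude from the limit---is precisely the paper's approach, and steps (b) and the final deduction are fine. However, your openness argument in (a) has a genuine gap: the image of $Z\cap\bigl(\Gr(d',\g)\times(\g\smallsetminus\g_{rs})\bigr)$ under $\pi$ is \emph{not} $\Gr(d',\g)\smallsetminus U$; it is all of $\Gr(d',\g)$. Indeed $0\in W\cap(\g\smallsetminus\g_{rs})$ for every $W$, so $(W,0)$ lies in your closed set for every $W$, and properness of $\pi$ yields no information. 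The point is that ``contains a non-regular-semisimple element'' is a vacuous condition, not the complement of $U$; the complement of $U$ is ``contained in $\g\smallsetminus\g_{rs}$,'' which is not the image of your closed set.

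The correct argument runs the other way: with $q:Z\to\g$ the second projection, $q^{-1}(\g_{rs})$ is open in $Z$, and $\pi:Z\to\Gr(d',\g)$ is the projection of a vector bundle, hence flat of finite type (equivalently, smooth), hence an \emph{open} map; therefore $U=\pi\bigl(q^{-1}(\g_{rs})\bigr)$ is open. This is the argument the paper intends (its text appears to swap the labels of the two projections, but the content is the openness of the bundle projection $K\to\Gr(d',\g)$, not its properness). So: replace the properness-of-$\pi$ step by openness of $\pi$ applied to the open set $q^{-1}(\g_{rs})$, and the rest of your proof goes through unchanged.
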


\begin{proof}
We define the tautological vector bundle $K$ over  the grassmannian 
\\
$\Gr(d',\g)$, $p: K\rightarrow \Gr(d',\g)$ and $q: K \rightarrow \g$ the two projections. Let $\g_{rs}$ be the open set of regular semisimple elements of $\g$. Since $q$ is flat, $q\left(p^{-1}( \g_{rs})\right)$ is an open set of $\Gr(d',\g)$ containing $V_0$, and so there exists $t_0\in \co^*$ such that $\mu(t_0)V$ is included in $q\left(p^{-1}( \g_{rs})\right)$. Finally, $\mu(t_0)V$ contains a regular semisimple element, so does $V$.    
\end{proof}

We prove  Proposition \ref{semsimple} using a decreasing induction on 
\[
r=\sup_{\h} \dim V\cap \h,
\]
where $\h$ ranges through all Cartan subalgebras. 

\begin{proof}[Proof of Proposition \ref{semsimple}] 
\textbf{Initialization}. The case $r=l$ is obvious.

\textbf{Induction}. Let $r<l$, and assume the result is true for all $k$ such that $r<k\leq l$. Let $\h$ be a Cartan subalgebra such that $\dim V\cap\h =r$, $T$ be a maximal torus of $G$ such that $\h$ is the Lie algebra of $T$, $\mu$ be a regular one-parameter subgroup of $T$, and $\Phi$ be the root system of $(\g,\h)$.  It follows that $V_0=\lim_{t\rightarrow 0} \mu(t)\cdot V$ is $\h$-stable, so we can choose to write it as the direct sum
\[
V_0=V_0\cap \h \oplus \sum_{\alpha\in S} \co x_{\alpha},
\]
where $S$ is a subset of $\Phi$ and $x_{\alpha}$ a non zero vector of the root space $\g_{\alpha}$. Denoting $R=S\cap (-S)$, two cases appear.
\begin{enumerate}
\item[i)] $R=\varnothing$, so $\sharp S \leq \frac{g-l}2$, hence $l \geq \dim V_0\cap \h = \dim V_0 - \sharp S \geq l$, so this forces $\h \subset V_0$; the conclusion follows from Lemma \ref{semisimplelimit}.
\item[ii)] For $\alpha \in R$, the linear form $w(x_{\alpha},x_{-\alpha},.)$ vanishes on $V_0$, so we have $V_0\cap \h \subset \ker \alpha$. The vector space $V_0\cap \h \oplus \co (x_{\alpha}+x_{-\alpha})$ is an abelian Lie algebra consisting of semisimple elements so is contained in a Cartan subalgebra $\h_1$: $\dim V_0\cap \h_1 > \dim V_0\cap \h_0$. By induction, $V_0$ contains a regular semisimple element, hence so does $V$. \qedhere
\end{enumerate}
\end{proof}

\newtheorem{cor}[thm]{Corollary}
\begin{cor} \label{csgofmax}
\begin{enumerate}
\item[(a)] The maximal nullspace $V$ contains a Cartan subalgebra.
\item[(b)] There exists a one-parameter subgroup such that $V_0$ is a Borel subalgebra.
\item[(c)] $\dim V =d$.
\end{enumerate}
\end{cor}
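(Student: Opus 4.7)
By Proposition~\ref{semsimple} I fix a regular semisimple element $x\in V$ and let $\h=\mathfrak{z}_\g(x)$ be the unique Cartan subalgebra containing it. The whole argument then rests on one alternating form.

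Consider the skew bilinear form $\beta_x(u,v):=w(x,u,v)=\kappa([x,u],v)$ on $\g$. Non-degeneracy of $\kappa$ together with regularity of $x$ forces $\ker\beta_x=\mathfrak{z}_\g(x)=\h$, so $\beta_x$ descends to a non-degenerate alternating form on the $(g-l)$-dimensional quotient $\g/\h$, whose maximal isotropic subspaces have dimension $(g-l)/2$. Since $x\in V$ and $w$ vanishes on $V\times V\times V$, the image of $V$ in $\g/\h$ is isotropic, hence $\dim V-\dim(V\cap\h)\leq (g-l)/2$. Combined with $\dim V\geq d=(g+l)/2$ this gives $\dim(V\cap\h)\geq l$, forcing $V\cap\h=\h$; this proves (a). Substituting back yields $\dim V\leq l+(g-l)/2=d$, so $\dim V=d$ and (c) follows.

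For (b) I take the maximal torus $T$ with Lie algebra $\h$, pick a regular one-parameter subgroup $\mu$ of $T$, and set $V_0=\lim_{t\to 0}\mu(t)\cdot V$. By regularity, $V_0$ is $\h$-stable; since $\h\subset V$ and each $\mu(t)$ fixes $\h$ pointwise, the inclusion $\h\subset\mu(t)V$ survives in the limit, so $\h\subset V_0$. Write $V_0=\h\oplus\bigoplus_{\alpha\in S}\co x_\alpha$ with $\sharp S=d-l=\sharp\Phi/2$. Evaluating $w(h,x_\alpha,x_{-\alpha})=\alpha(h)\,\kappa(x_\alpha,x_{-\alpha})$ at a regular $h\in\h$ rules out $\{\alpha,-\alpha\}\subset S$, hence $S\sqcup(-S)=\Phi$; and if $\alpha,\beta\in S$ had $\alpha+\beta\in\Phi$ but $\alpha+\beta\notin S$, then $-(\alpha+\beta)\in S$ would force $w(x_\alpha,x_\beta,x_{-\alpha-\beta})\neq 0$, contradicting the nullspace condition on $V_0$. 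Thus $S$ is closed and complementary to $-S$, which is the classical characterization of a positive root system, and $V_0$ is a Borel subalgebra.

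The only genuinely new input is the alternating-form observation of the second paragraph; once that is in place, the combinatorial analysis of $S$ simply recapitulates case (i) in the proof of Proposition~\ref{semsimple}. The point that most deserves care is the last claim, namely that a closed subset $S\subset\Phi$ with $S\sqcup(-S)=\Phi$ is indeed a system of positive roots.
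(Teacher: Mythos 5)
Your proof is correct and follows essentially the same route as the paper: for (a) you reproduce the key observation that $\psi_s=w(s,\cdot,\cdot)$ descends to a non-degenerate skew form on $\g/\h$, that $V$ maps to an isotropic subspace, and hence $\dim(V\cap\h)\geq l$; for (b) you degenerate under a regular one-parameter subgroup of $T$ and analyse the root support $S$ exactly as the paper does. The one small divergence is the order of (b) and (c): the paper proves (b) first and then reads off $\dim V=\dim V_0=d$ from the Borel subalgebra, whereas you extract $\dim V\leq d$ directly from the isotropy inequality once (a) is known, getting (c) before (b). Both routes are valid and equivalent in content; your variant has the mild advantage that the equality $\sharp S=\sharp\Phi/2$ used in (b) is already available, which tightens the bookkeeping. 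Your explicit justification that $S\cap(-S)=\varnothing$ (via $w(h,x_\alpha,x_{-\alpha})=\alpha(h)\kappa(x_\alpha,x_{-\alpha})\neq0$ for regular $h$) and that a closed subset with $S\sqcup(-S)=\Phi$ is a positive system are correct; the paper leaves the first implicit and you are right to flag the second as the standard characterization of positive systems.
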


\begin{proof}
Let $s$ be a regular semisimple element contained in $V$. 

(a) The centralizer $\mathfrak{c}(s)$ is a Cartan subalgebra. let $\overline{\g}$ be the quotient of $\g$ by $\mathfrak{c}(s)$, $\pi$ be the projection on $\overline{\g}$. Since $\psi_s=w(s,.,.)$ is a non degenerate skewsymmetric bilinear form over $\overline{\g}$ and $\pi(V)$ is an isotropic subspace,  
\begin{eqnarray*}
\dim \pi(V) &\leq & \frac{1}{2} \dim \overline{\g} \\
\dim V - \dim V\cap\mathfrak{c}(s) & \leq & \frac{1}{2}\left(\dim \g - \dim \mathfrak{c}(s) \right) \\
\dim V\cap\mathfrak{c}(s) &\geq& l
\end{eqnarray*}
and finally $\mathfrak{c}(s)\subset V$.

(b) Let $T$ be a maximal torus of $G$ with Lie algebra $\h:=\mathfrak{c}(s)$, $\Phi$ be the root system of $(\g,\h)$, $\mu$ be a regular one-parameter subgroup of $T$. It follows that the limit subspace $V_0$ has a decomposition 
\[
V_0=\h \oplus \sum_{\alpha\in S} \co x_{\alpha},
\]
where $S$ and $-S$ form a partition of $\Phi$. Now, for $\alpha$, $\beta \in S$  such that $\alpha +\beta $ is a root, $w(x_\alpha ,x_\beta ,x_{-\alpha -\beta })\neq 0$ proves that $\alpha +\beta\in S$, so we can choose a bases of $\Phi$ such that $S$ is the set of positive roots. 

(c) follows from (b) and $\dim V =\dim V_0$.   
\end{proof}

We can now describe the maximal nullspace $V$ by using a Cartan subalgebra $\h$ contained in $V$ and the associated root system $\Phi$:
\[
V=\h \oplus \bigoplus_{\alpha\in \Phi^+} L_{\alpha},
\]
where $L_{\alpha}$ is a vector subspace of dimension $1$ of $\g_{\alpha} \oplus \g_{-\alpha}$, $\g_\alpha$ the root space of $\alpha$.
 
Let $V$ be a maximal nullspace of $(\g,w)$ containing a Cartan subalgebra $\h$, $\Phi$ be the root system of $(\g,\h)$, choose $\alpha\in\Phi$, and let $h_0$ be an element of $\h$ such that its centralizer is $\mathfrak{c} (h)=\h \oplus \g^\alpha \oplus \g^{-\alpha}$. Using the argument in the proof of Corollary \ref{csgofmax} (the first point), we have $\dim V\cap \mathfrak{c} (h)\geq l+1$. But $\h \subset V$, so $\dim V\cap \left(\g^\alpha \oplus \g^{-\alpha} \right) \geq 1$. The linear form $w(x_\alpha , x_{-\alpha}, \cdot)$ is non zero on $\h$, and so $\dim V\cap \left(\g^\alpha \oplus \g^{-\alpha} \right) = 1$.

\begin{lem} \label{decomp}
Let $V\in Y$. There exists a Cartan subalgebra such that
\[
V=\h \oplus \bigoplus_{\alpha\in \Phi^+} L_{\alpha},
\]
where $\Phi$ is the root system of $(\g,\h)$ and $L_{\alpha}$ is a vector subspace of dimension $1$ of $\g_{\alpha} \oplus \g_{-\alpha}$.  
\end{lem}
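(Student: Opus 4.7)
The plan is simply to assemble the facts already proved in the discussion immediately above the statement. First, by Corollary \ref{csgofmax}(a), $V$ contains a Cartan subalgebra $\h$; let $\Phi$ denote the root system of $(\g,\h)$. The paragraph preceding the lemma has established, for every $\alpha\in\Phi$, that the intersection $V\cap(\g_\alpha\oplus\g_{-\alpha})$ is exactly one-dimensional. I would define $L_\alpha$ to be this line and note that $L_\alpha=L_{-\alpha}$ as subspaces of $\g$.

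Next, I would fix any system of positive roots $\Phi^+\subset\Phi$ and consider
\[
W:=\h+\sum_{\alpha\in\Phi^+}L_\alpha \;\subseteq\; V.
\]
Because the root-space decomposition $\g=\h\oplus\bigoplus_{\alpha\in\Phi^+}(\g_\alpha\oplus\g_{-\alpha})$ is a direct sum and $\h\subset V$ while each $L_\alpha\subset\g_\alpha\oplus\g_{-\alpha}\subset V$, the sum defining $W$ is automatically direct and $W\subseteq V$.

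Finally, a dimension count closes the proof: $\dim W=l+\sharp\Phi^+=l+(g-l)/2=d$, whereas $\dim V=d$ by Corollary \ref{csgofmax}(c), forcing $W=V$ and yielding exactly the claimed decomposition.

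There is no real obstacle at this stage: all the substantive work, namely producing a Cartan subalgebra inside $V$, computing $\dim V=d$, and forcing $\dim V\cap(\g_\alpha\oplus\g_{-\alpha})=1$ for each root $\alpha$, has already been carried out in Corollary \ref{csgofmax} and in the paragraph immediately preceding the statement. The only care needed is to parametrise the lines $L_\alpha$ by positive roots rather than by all of $\Phi$, in order not to double-count the equality $L_\alpha=L_{-\alpha}$.
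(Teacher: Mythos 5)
Your proposal is correct and follows exactly the argument the paper itself uses: Corollary~\ref{csgofmax}(a) supplies the Cartan subalgebra, the paragraph immediately preceding the lemma shows $\dim V\cap(\g_\alpha\oplus\g_{-\alpha})=1$ for each root $\alpha$, and a dimension count using Corollary~\ref{csgofmax}(c) forces equality. No substantive difference from the paper's (implicit) proof.
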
    

\textit{Remark}. If $\alpha$, $\beta$ are two positive roots such that $\alpha+\beta$ is a root, and if we denote by $v_{\alpha+\beta}$, $v_\alpha$, $v_\beta$  bases of $L_{\alpha+\beta}$, $L_{\alpha}$, $L_{\beta}$, then $w(v_\alpha,v_\beta,v_{\alpha+\beta})=0$ shows $v_{\alpha+\beta}$ is defined, up to a scalar, by $v_\alpha$, $v_\beta$. Let $\Delta$ be a root bases according to a Borel subalgebra. It is easy to compute that, up to conjugacy, we have two choices for $L_\alpha$, $\alpha\in \Delta$: this is a root space or not. 

\section{Orbits of $Y$} \label{section2}

The set $Y$ of maximal nullspaces of $(\g,w)$ is a closed set of $\Gr:=\Gr(d,\g)$, and is stable by the adjoint action of $G$. Thanks to Corollary \ref{csgofmax}, there is one closed orbit consisting of Borel subalgebras. In this section, we give a condition for two elements of $Y$ to be conjugate. 

\begin{prop} \label{parabV}
\begin{enumerate}
\item[(i)] The minimal parabolic subalgebra which contains $V\in Y$ is $\mathfrak{p}_V:=V+[V,V]$.
\item[(ii)] If $V_1$ and $V_2$ are two elements of $Y$ such that $\mathfrak{p}_{V_1}=\mathfrak{p}_{V_2}$, then $V_1$ and $V_2$ are conjugate under $G$.
\end{enumerate}
\end{prop}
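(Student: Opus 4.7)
The plan is to use the root-space description of $V$ from Lemma \ref{decomp}: fix a Cartan $\h \subseteq V$ and write $V = \h \oplus \bigoplus_{\alpha \in \Phi^+} L_\alpha$. Call $\alpha \in \Phi^+$ \emph{pure} if $L_\alpha$ equals $\g_\alpha$ or $\g_{-\alpha}$, and \emph{mixed} otherwise.

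For (i), first compute $V + [\h,V]$. Since $[h, a x_\alpha + b x_{-\alpha}] = \alpha(h)(a x_\alpha - b x_{-\alpha})$, the sum $L_\alpha + [\h, L_\alpha]$ equals $L_\alpha$ when $\alpha$ is pure and equals $\g_\alpha \oplus \g_{-\alpha}$ when $\alpha$ is mixed; therefore $V + [\h, V] = \h \oplus \bigoplus_{\gamma \in \Psi_V} \g_\gamma$ for a subset $\Psi_V \subseteq \Phi$ that records which root spaces are produced. The main step is to check that $\Psi_V$ is a \emph{parabolic subset} of $\Phi$, i.e.\ $\Psi_V \cup (-\Psi_V) = \Phi$ and $\Psi_V$ is closed under addition whenever the sum lies in $\Phi$. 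The first property is immediate by construction. The second is verified by a short case analysis on the pure/mixed pattern of the relevant roots, relying on the nullspace identity $\kappa([v_\alpha, v_\beta], v_{\alpha+\beta}) = 0$; the most delicate case is when $\alpha, \beta$ are both mixed with $\alpha + \beta$ a root, where this identity becomes a single linear equation in the two coordinates of $v_{\alpha+\beta}$, forcing both coordinates to be nonzero (so $\alpha+\beta$ is again mixed) by direct calculation of the commutators $[x_{\pm\alpha}, x_{\pm\beta}]\in\g_{\pm(\alpha+\beta)}$ and the non-vanishing of the associated structure constants. Once $\Psi_V$ is parabolic, $\mathfrak{q} := V + [\h, V] = \mathfrak{p}_{\Psi_V}$ is a parabolic subalgebra containing $V$, so $[V,V] \subseteq [\mathfrak{q},\mathfrak{q}] \subseteq \mathfrak{q}$, giving $\mathfrak{p}_V = V + [V,V] = \mathfrak{q}$. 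Minimality follows because any parabolic $\mathfrak{p}' \supseteq V$ contains $\h$ and hence has the form $\h \oplus \bigoplus_{\gamma \in \Psi'} \g_\gamma$; the inclusion $L_\alpha \subseteq \mathfrak{p}'$ forces $\Psi' \supseteq \Psi_V$, so $\mathfrak{p}' \supseteq \mathfrak{p}_V$.

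For (ii), since any two Cartan subalgebras of $\g$ contained in a common parabolic $\mathfrak{p}$ are conjugate under the corresponding parabolic subgroup $P$, one may replace $V_2$ by a $P$-conjugate so that $V_1$ and $V_2$ share a Cartan $\h$. Writing $V_i = \h \oplus \bigoplus_{\alpha \in \Phi^+} L_\alpha^{(i)}$ and applying (i), both $V_i$ yield the same parabolic subset $\Psi$; in particular the pure/mixed partition of $\Phi^+$ is the same for both, and the pure lines agree automatically. For each mixed simple root $\alpha$, $L_\alpha^{(i)} = \co(x_\alpha + c_\alpha^{(i)} x_{-\alpha})$ for some $c_\alpha^{(i)} \neq 0$; the torus $T$ with $\mathrm{Lie}(T) = \h$ acts by $c_\alpha \mapsto \alpha(t)^{-2} c_\alpha$, and since the simple roots give $l$ independent characters of $T$, a suitable $t \in T$ aligns the simple mixed coefficients of $V_2$ with those of $V_1$. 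By the Remark after Lemma \ref{decomp}, the remaining non-simple mixed lines are then determined by the simple mixed ones, so $V_1 = V_2$ after this $T$-conjugation. The main technical obstacle throughout is ensuring, in the case analysis of (i), that the nullspace relation leaves no ``degenerate'' option (in particular no $v_{\alpha+\beta}$ that would be pure negative and thereby obstruct closure), which is where the non-vanishing of structure constants is used.
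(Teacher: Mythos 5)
Your proposal is correct, and for part (ii) it follows a genuinely different (more explicit) route than the paper. For (i), the paper dismisses the claim as ``obvious using Lemma~\ref{decomp}''; you supply the actual verification that $V + [\h,V]$ is the span of $\h$ and root spaces indexed by a parabolic subset $\Psi_V$ of $\Phi$, and the case analysis (pure+pure, pure+mixed, mixed+mixed) is the right one --- in each case the Killing form pairing $\kappa([v_\alpha,v_\beta],\cdot)$ restricted to $\g_{\alpha+\beta}\oplus\g_{-\alpha-\beta}$ is nonzero, so $L_{\alpha+\beta}$ is its kernel and its pure/mixed type is read off from which commutator components survive. For (ii), the paper's argument, after arranging a common Cartan, introduces two Borel subalgebras $\bor_1,\bor_2$ of $\mathfrak{p}_{V_1}$ and conjugates $\bor_1$ to $\bor_2$ by an element of the adjoint group of the parabolic, then ``concludes without difficulties''; you instead fix one positive system compatible with the parabolic (so both $V_i$ decompose over the same $\Phi^+$), observe that $\Psi_{V_1}=\Psi_{V_2}$ forces identical pure/mixed patterns and identical pure lines, and then use that the simple roots give independent characters of $T$ (so $t\mapsto(\alpha(t)^{-2})_\alpha$ is surjective) to align the mixed simple coefficients, with the non-simple mixed lines then forced by the propagation in the Remark after Lemma~\ref{decomp}. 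That step --- why aligning at simple Levi roots suffices --- is precisely the content the paper leaves implicit, so your version is a complete and somewhat cleaner argument; the only thing worth stating more carefully is that the coefficients $c_\alpha^{(i)}$ are taken with respect to a fixed choice of root vectors $x_{\pm\alpha}$, so the $T$-action on them is unambiguous.
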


\begin{proof}
(i) is obvious using Lemma \ref{decomp}.

(ii) Assume $\mathfrak{p}_{V_1}=\mathfrak{p}_{V_2}$. Up to conjugacy of $V_2$ under the adjoint group of $\mathfrak{p}_{V_1}$, assume the existence of a Cartan subalgebra $\h$ contained in $V_1\cap V_2$. Choosing a root system of $(\g,\h)$, there are two Borel subalgebras $\bor_1$ and $\bor_2$ such that, for $i\in\{1,2\}$
\[
\mathfrak{p}_{V_i}=\bor_i \oplus \bigoplus_{\alpha\in S_i} \co x_{-\alpha}^{(i)},  \quad
V_i=V_i\cap \bor_i \oplus \bigoplus_{\alpha\in S_i} \co (x_{\alpha}^{(i)}+x_{-\alpha}^{(i)}), 
\]
where $S_i$ is the set of positive roots (roots of $\bor_i$ such that $L_{\alpha}$ is not spanned by a root vector. There exists $g$ in the adjoint group of $\mathfrak{p}_{V_1}$ such that $g\cdot \bor_1 =\bor_2$. This forces $g\cdot x_\alpha^{(1)} =x_\beta^{(2)}$ with $\alpha\in S_1$ and $\beta\in S_2$. We conclude without difficulties.
\end{proof}

\textit{Remark}. The number of orbits in $Y$ is equal to $2^l$, the number of parabolic orbits. Indeed, Proposition \ref{parabV} says that $V_1$ and $V_2$ are in the same orbit in $Y$ if and only if $\mathfrak{p}_{V_1}$ and $\mathfrak{p}_{V_2}$ are conjugate. Conversely, for each parabolic subalgebra $\mathfrak{p}$, we can find an element of $Y$ such that $\mathfrak{p}_V=\mathfrak{p}$. 

\medskip

Moreover, there is only one orbit with dimension equal to $\dim Y$, given by the parabolic subalgebra $\g$,
\[
Y= \overline{G\cdot V},
\]
where $V$ is the direct sum $\h \oplus \sum_{\alpha \in \Phi^+} \co (x_{\alpha}+x_{-\alpha})$. 

Recall that, given a Cartan subalgebra $\h$ such that the restriction of the involution $\sigma$ to $\h$ is $-\mathrm{id}_\h$, it follows that $\sigma$ sends $x_{\alpha}$ to $t_\alpha x_{-\alpha}$ with $t_\alpha^2=1$. Since $\g^\sigma = \sum_{\alpha\in \Phi^+} \co (x_\alpha+t_\alpha x_{-\alpha})$, it is easy to see that 
\[
\g_{-1}=\left( \g^\sigma \right)^\bot = \h \oplus \sum_{\alpha \in \Phi^+} \co (x_{\alpha}-t_\alpha x_{-\alpha})
\]
(orthogonality being given by the Killing form). So, as sets, $Y$ and the wonderful compactification are isomorphic (we identify $\Gr(d-l,\g)$ and $\Gr(d,\g)$ by the isomorphism $W\mapsto W^\bot$).

As a consequence, $Y$ has dimension $d$. The next section shows that the equality is also true as a variety.  

\section{Equations of $Y$} \label{section4}

Recall that the grassmannian variety $\Gr$ has an exact sequence of locally free sheaves:
\begin{equation} \label{tauto}
0 \rightarrow K \rightarrow \g\otimes \Or_\Gr \rightarrow Q \rightarrow 0,
\end{equation}
where $K$ is the tautological sheaf of rank $d$ and $Q$ the quotient sheaf of rank $\frac{g-l}2 $. The datum $w\in \bigwedge^3 \g^\vee$ gives a section $w_1 :\Or_\Gr \rightarrow \bigwedge^3 K^\vee$ and by transposition, a morphism $ ^t w_1:\bigwedge^3 K \rightarrow \Or_\Gr $, whose the image is an ideal defining $Y$, denoted by $I_Y$.   

\medskip

\textit{Remark}. We describe this last morphism locally. Let $\Lambda\in Y$, take a base $x_1,\dots,x_d$ of $\Lambda$ and $y_1,\dots,y_{n-d}$ a base of a complementary $W$ of $\Lambda$. We can identify $\mathcal{U}=\mathrm{Hom}(\Lambda,W)$ with an affine open set of $\Gr$ by identifying $u\in \mathrm{Hom}(\Lambda,W)$ with the graph of $u$ viewed in $\Lambda\oplus W=\g$. Denote by $X_{i,j}$, with $1\leq i\leq d$ and $1\leq j \leq g-d$, the coordinate with respect to the previous bases. So $^t w_1 \, : \, \bigwedge^3 \Lambda \otimes \Or_{\Gr,\Lambda}\rightarrow \Or_{\Gr,\Lambda}$ sends  $x_{i_1}\bigwedge x_{i_2} \bigwedge x_{i_3}\otimes 1$ to  
\begin{equation} \label{eq1}
F_{i_1,i_2,i_3}:= w\left( x_{i_1}+\sum_j X_{i_1,j} \, y_j, \, x_{i_2}+\sum_j X_{i_2,j} \, y_j , \, x_{i_3}+\sum_j X_{i_3,j} \, y_j \right).
\end{equation}  
The polynomials $F_{i_1,i_2,i_3}$ for $1\leq i_1 < i_2 < i_3 \leq d$ span $I_{Y,\Lambda}$. To show that $Y$ is isomorphic, as a variety, to the wonderful compactification, we prove that $Y$ is a smooth variety. It is sufficient to show that $Y$ is smooth on the minimal orbit, so we must analyze the stalk of $\Omega_Y^1$, the sheaf of K\"ahler differentials, at a Borel subalgebra. 

\begin{thm} \label{smooth}
The variety $Y$ is smooth.
\end{thm}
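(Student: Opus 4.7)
My plan is to reduce smoothness of $Y$ to a single tangent space computation at a Borel subalgebra, and then carry out that computation in the affine chart already introduced in the remark preceding the theorem.

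First, I would note that $Y$ is irreducible of dimension $d$ (by the remark at the end of Section \ref{section2}, it is the closure of a single $G$-orbit), that its singular locus $Y_{\mathrm{sing}}$ is closed and $G$-stable, and that the closed orbit $G\cdot\bor$ of Borel subalgebras is contained in the closure of every other $G$-orbit of $Y$. Consequently, if $Y_{\mathrm{sing}}$ is non-empty it must meet the closed orbit, and by transitivity of $G$ on that orbit it then suffices to prove that $Y$ is smooth at one fixed Borel subalgebra $\bor$.

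Next I would fix a Cartan subalgebra $\h\subset\bor$, its root system $\Phi$ with positive system $\Phi^+$ matching $\bor$, and work in the affine chart of the previous remark: basis $\{h_1,\dots,h_l\}\cup\{x_\alpha\}_{\alpha\in\Phi^+}$ of $\bor$ and complementary basis $\{x_{-\alpha}\}_{\alpha\in\Phi^+}$ of $\g$. A tangent vector at $\bor$ is represented by a linear map $u\in\mathrm{Hom}(\bor,\g/\bor)$. The constant term of $F_{i_1,i_2,i_3}$ in (\ref{eq1}) vanishes since $\bor$ is a nullspace, and using the invariance of $\kappa$ its linear part at the origin reads
\[
\kappa(u(b_{i_1}),[b_{i_2},b_{i_3}]) + \kappa(u(b_{i_2}),[b_{i_3},b_{i_1}]) + \kappa(u(b_{i_3}),[b_{i_1},b_{i_2}])=0,
\]
for every triple $b_{i_1},b_{i_2},b_{i_3}$ in the chosen basis of $\bor$. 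The Zariski tangent space $T_\bor Y$ is the subspace of $\mathrm{Hom}(\bor,\g/\bor)$ cut out by all these linear relations.

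Then I would decompose $\mathrm{Hom}(\bor,\g/\bor)$ into $\h$-weight spaces and solve the system weight by weight. The weights are $-\alpha$ (coming from $\h\to\g_{-\alpha}$, with multiplicity $l$) and $-\alpha-\beta$ (from $\g_\beta\to\g_{-\alpha}$, $\alpha,\beta\in\Phi^+$); in each weight only finitely many equations intervene. I expect the outcome to be: each weight $-\alpha$ with $\alpha\in\Phi^+$ contributes a one-dimensional solution space, accounting for the $\#\Phi^+=(g-l)/2$ tangent directions of the orbit $G\cdot\bor\cong G/B$; each weight $-2\alpha$ with $\alpha$ a simple root contributes one extra dimension, corresponding to the deformation $\g_\alpha\leadsto\co(x_\alpha+tx_{-\alpha})$ inside $Y$; and all other weight spaces are cut out to zero. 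Totalling $(g-l)/2+l=d$ gives $\dim T_\bor Y\le d$, which combined with $\dim Y=d$ yields smoothness at $\bor$, hence on all of $Y$ by the reduction above.

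The main obstacle will be the last step: the systematic bookkeeping needed to check that the non-simple weight spaces are indeed killed by the equations, while the prescribed orbit and simple-root directions survive. The remark after Lemma \ref{decomp}, to the effect that the $L_\alpha$ for non-simple $\alpha$ are determined by those for simple $\alpha$, already encodes the essential combinatorics; what remains is to translate this into the linear-algebraic statement that the Jacobian of the family $(F_{i_1,i_2,i_3})$ at $\bor$ has corank exactly $d$.
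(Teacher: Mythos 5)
Your reduction to a single tangent-space computation at a Borel point (via irreducibility of $Y$, $G$-stability of the singular locus, and the uniqueness of the closed orbit), the linearization of the local equations $F_{i_1,i_2,i_3}$ from \eqref{eq1}, and the expected outcome (one tangent dimension in each weight $-\alpha$, $\alpha\in\Phi^+$, plus one extra dimension in weight $-2\alpha$ for each simple $\alpha$, totalling $d$) all coincide with the paper's argument; the paper's Lemma \ref{corank} is precisely the dual of the weight-space count you describe.

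The genuine gap is that this count is never carried out: you write that you \emph{expect} the non-simple weight spaces of $\mathrm{Hom}(\bor,\g/\bor)$ to be cut out to zero and the simple $-2\alpha$ spaces to survive, and you explicitly defer the ``systematic bookkeeping'' to a remaining step. That step is the entire technical content of the theorem. In the paper it is Lemma \ref{corank}, proved by the three identities \eqref{eq2}, \eqref{eq3}, \eqref{eq4}: choosing $h$, $k$ suitably, one shows that $h\otimes x_\alpha$ with $\alpha(h)=0$, $x_\alpha\otimes x_\beta$ with $\alpha\neq\beta$, and $x_\alpha\otimes x_\alpha$ with $\alpha$ non-simple all lie in the image of $D:\bigwedge^3\bor\to\bor\otimes[\bor,\bor]$ modulo the $h_\alpha\otimes x_\alpha$, so that $\coker D$ is spanned by $\{h_\alpha\otimes x_\alpha : \alpha\in\Phi^+\}\cup\{x_\alpha\otimes x_\alpha : \alpha\ \text{simple}\}$ and hence has dimension at most $\frac{g-l}{2}+l=d$. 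To complete your proposal you would need to produce these identities, or equivalently solve the linear system defining $T_\bor Y$ weight by weight and actually verify that the non-simple, non-orbit weight spaces are killed --- this is not a formality but the heart of the argument.
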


Before proving the theorem, we need the following lemma.

\begin{lem} \label{corank}
Let $\bor$ be a Borel subalgebra of $\g$. The linear map $D$ defined by
\[
\begin{array}{rcl}
  \bigwedge^3 \bor & \rightarrow & \bor \otimes [\bor, \bor ] \\
  v_1\wedge v_2 \wedge v_3 & \mapsto & v_1\otimes [v_2,v_3] +v_2\otimes [v_3,v_1]+v_3\otimes [v_1,v_2] 
\end{array}       
\]
has corank less than or equal to $d$. 
\end{lem}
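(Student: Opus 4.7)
The plan is to construct a surjection $\coker(D) \twoheadrightarrow [\bor, \bor]$ from the Jacobi identity, and bound its kernel by $l$; these together give $\dim \coker(D) \leq (d-l) + l = d$.

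The bracket $\bor \otimes [\bor, \bor] \to [\bor, \bor]$, $b \otimes n \mapsto [b, n]$, is surjective since $[\h, \g_\alpha] = \g_\alpha$ for every positive root $\alpha$ (for $\h \subset \bor$ a Cartan subalgebra). By the Jacobi identity it annihilates $\mathrm{im}(D)$, so it descends to a surjection $\overline{[\cdot,\cdot]} : \coker(D) \twoheadrightarrow [\bor, \bor]$, and the problem reduces to showing $\dim \ker \overline{[\cdot,\cdot]} \leq l$.

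Fix a system $\Delta$ of simple roots for $(\g, \h)$ and a nonzero root vector $x_\alpha \in \g_\alpha$ for each positive root. Set $V := \mathrm{span}\{x_\alpha \otimes x_\alpha : \alpha \in \Delta\}$; it is $l$-dimensional and contained in $\ker([\cdot,\cdot])$, since $[x_\alpha, x_\alpha] = 0$. The key claim is $\ker([\cdot,\cdot]) \subset \mathrm{im}(D) + V$, which yields $\dim \ker \overline{[\cdot,\cdot]} \leq l$. This would be checked weight by weight, using the $\h$-equivariance of $D$: for $\mu = \alpha$ a simple root, the formula $D(h_1 \wedge h_2 \wedge x_\alpha) = (\alpha(h_2) h_1 - \alpha(h_1) h_2) \otimes x_\alpha$ shows $(\ker[\cdot,\cdot])_\mu = \ker(\alpha) \otimes \g_\alpha \subset \mathrm{im}(D)_\mu$; for $\mu = 2\alpha$ with $\alpha \in \Delta$, $(\bor \otimes [\bor,\bor])_\mu = \g_\alpha \otimes \g_\alpha = V_\mu$ exactly; and for every other weight $\mu$ one shows $\mathrm{im}(D)_\mu = (\ker[\cdot,\cdot])_\mu$ using the explicit formula
\[
D(h \wedge x_\beta \wedge x_{\mu-\beta}) = c\, h \otimes x_\mu - (\mu-\beta)(h)\, x_\beta \otimes x_{\mu-\beta} + \beta(h)\, x_{\mu-\beta} \otimes x_\beta
\]
(the first term vanishes when $\mu \notin \Phi^+$) summed over decompositions $\mu = \beta + (\mu-\beta)$, supplemented when $\mu = 2\beta$ for a non-simple positive root $\beta = \delta + \epsilon$ by the triple term $D(x_\beta \wedge x_\delta \wedge x_\epsilon)$, whose $x_\beta \otimes [x_\delta, x_\epsilon]$ piece contributes a nonzero multiple of $x_\beta \otimes x_\beta$.

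The main obstacle is the case of non-simple positive weights $\mu$: one must combine contributions from $\bigwedge^2 \h \otimes \g_\mu$ and from several $\h \otimes (\g_\beta \wedge \g_{\mu-\beta})$ sources at once, and establish by a dimension count (relying on the linear independence of distinct roots on $\h$) that together they span $(\ker[\cdot,\cdot])_\mu$.
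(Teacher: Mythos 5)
Your plan --- factor the target through the bracket $\bor\otimes[\bor,\bor]\to[\bor,\bor]$, note that the Jacobi identity puts $\mathrm{Im}\,D$ in its kernel, hence a surjection $\coker D\twoheadrightarrow[\bor,\bor]$, and bound the kernel by $l$ --- is a legitimate reorganization of the argument, and the Jacobi observation is a nice conceptual point the paper leaves implicit. The paper instead expands $D$ explicitly on $\bigwedge^2\h\otimes\g_\alpha$, on $\h\otimes\g_\alpha\wedge\g_\beta$, and on $\g_{\alpha+\beta}\wedge\g_\alpha\wedge\g_\beta$ (its equations \eqref{eq2}--\eqref{eq4}), and reads off a spanning set of $\coker D$ of size $\frac{g-l}{2}+l=d$: one class $h_\alpha\otimes x_\alpha$ for each positive root and one class $x_\alpha\otimes x_\alpha$ for each simple root. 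Since $\dim[\bor,\bor]=\frac{g-l}{2}$, the two bounds coincide; what you gain is a cleaner interpretation of \emph{why} the count is $\frac{g-l}{2}+l$, what the paper gains is avoiding any tracking of the bracket map.

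However, your writeup stops short of closing the case you yourself flag as ``the main obstacle'' --- non-simple positive roots $\mu$ --- and that is precisely where the content of the lemma sits. The dimension count does work out, and it is worth saying how, because it uses the same expansions the paper records: modulo $\mathrm{Im}\,D$ applied to $\bigwedge^2\h\otimes\g_\mu$, the summand $\h\otimes\g_\mu$ collapses to the line $\co\,h_\mu\otimes x_\mu$ (this is your simple-root computation, unchanged). Then for each unordered pair $\{\beta,\mu-\beta\}\subset\Phi^+$, linear independence of $\beta$ and $\mu-\beta$ (distinct positive roots in a reduced system) lets you choose $h$ with $\beta(h)=1,\ (\mu-\beta)(h)=0$, and then the opposite; since $[x_\beta,x_{\mu-\beta}]\ne 0$, the two resulting values of $D(h\wedge x_\beta\wedge x_{\mu-\beta})$ express both $x_{\mu-\beta}\otimes x_\beta$ and $x_\beta\otimes x_{\mu-\beta}$ as multiples of $h_\mu\otimes x_\mu$ modulo $\mathrm{Im}\,D$. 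Thus $\dim\coker(D)_\mu\le 1$, i.e.\ $(\ker[\cdot,\cdot])_\mu\subset\mathrm{Im}(D)_\mu$, exactly as your surjection-plus-kernel accounting requires. So the route is sound and the ``obstacle'' is not a real one, but the sketched step is the essential calculation, and once it is written out you have in effect re-derived the paper's three identities; the detour through $[\bor,\bor]$ streamlines the bookkeeping rather than supplying a shortcut.
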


\begin{proof}
Let $\bor =\mathfrak{h} \oplus \bigoplus_{\alpha\in \Phi^+} \g_\alpha $ be a root space decomposition. For $h$ and $k$ in $\h$, $\alpha$ and $\beta$ in $\Phi^+$, we have
\begin{align} 
\alpha(k)h\otimes x_\alpha & = D(h\wedge k \wedge x_\alpha)+\alpha(h)k\otimes x_\alpha, \label{eq2} \\
\alpha(h)x_\alpha\otimes x_\beta  & = D(h\wedge x_\alpha \wedge x_\beta)+\beta(h)x_\beta\otimes x_\alpha - h\otimes [x_\alpha,x_\beta],  \label{eq3} \\
x_{\alpha+\beta}\otimes x_{\alpha+\beta} & = D(x_{\alpha+\beta}\wedge x_{\alpha} \wedge x_{\beta}) - x_\alpha\otimes [x_\beta,x_{\alpha+\beta}] + x_\beta\otimes [x_\alpha,x_{\alpha+\beta}]. \label{eq4}
\end{align}
Let $W$ be the subspace of $\mathrm{coker}\; D$ spanned by $h_\alpha \otimes x_\alpha$, where $\alpha(h_\alpha)=2$ and $\alpha\in \Phi^+$. For suitable $h$ and $k$, equalities \eqref{eq2} and \eqref{eq3} show that $h\otimes x_\alpha$ with $\alpha(h)=0$ are in $\mathrm{Im}\, D$, and $x_\alpha\otimes x_\beta$ with $\alpha\neq \beta$ are in $W$, hence it follows from \eqref{eq4} that $x_\alpha \otimes x_\alpha \in W$ if $\alpha$ is not simple. Finally, $\coker D \subset W$. So the number of generators is $\frac{g-l}{2} + l=d$. 
\end{proof}

\begin{proof}[Proof of Theorem \ref{smooth}.]
Let $\bor$ be a Borel subalgebra, $\h\subset \mathfrak{b}$ be a Cartan subalgebra, $\Phi$ be the root system of $(\g,\h)$, with positive roots given by $\mathfrak{b}$, and $\g= \bor \oplus \mathfrak{n}^-$ be a root space decomposition with bases $x_1,\dots, x_d$ for $\bor$ (positive root vectors and a bases of $\h$), $y_1,\dots,y_{n-d}$ for $\mathfrak{n}^-$ (negative root vectors) such that $\kappa(x_i,y_i)\neq 0$, for $i\in \{1,\dots,d\}$. We use the following exact sequence on sheaves of differentials:
\[
I_Y / I_Y^2 \rightarrow \Omega_\Gr \otimes \Or_Y \rightarrow \Omega_Y \rightarrow 0.
\] 
Locally, we can compute the differential of $F_{i_1,i_2,i_3}$ in $\Omega_{\Gr,\bor}$ (image of the first map in the sequence). The result is 
\begin{multline} \label{diff}
dF_{i_1,i_2,i_3}=\sum_j w(y_j,x_{i_2},x_{i_3}) \, dX_{i_1,j} \, + \, \sum_j w(x_{i_1},y_j,x_{i_3}) \, dX_{i_2,j} \\ + \, \sum_j w(x_{i_1},x_{i_2},y_j) dX_{i_3,j}.
\end{multline}
\\
But $\Omega_{\Gr,\bor}$ is isomorphic to $(\g/\bor)^\vee \otimes \bor$, sending $dX_{i,j}$ to $y_i^\vee \otimes x_j$, where $y_i^\vee =\frac{1}{\kappa(x_i,y_i)}\kappa(x_i,\cdot)$ (the duality is relating to the Killing form $\kappa$, and we normalize to have $y_i^\vee (y_i)=1$). For the first sum in \eqref{diff}, we have 
\begin{eqnarray}
\sum_j w(y_j,x_{i_2},x_{i_3}) \, y_j^\vee \otimes x_{i_1} &=& \kappa \left( \sum_j  \frac{\kappa(y_j,[x_{i_2},x_{i_3}])}{\kappa(y_j,x_j)} x_{i_1} \, , \, \cdot \right) \otimes x_{i_1} \\
&=& \kappa([x_{i_2},x_{i_3}],\cdot)\otimes x_{i_1} .
\end{eqnarray}

The composition map $\bigwedge^3\mathfrak{b} \otimes \Or_{\Gr,\bor} \rightarrow I_{Y,\bor} / I_{Y,\bor}^2 \rightarrow \Omega_{\Gr,\bor} \otimes \Or_{Y,\bor}$ sends 
\\
$x_{i_1}\bigwedge x_{i_2}\bigwedge x_{i_3}$ to 
\[
\kappa([x_{i_2},x_{i_3}],\cdot)\otimes x_{i_1}+\kappa([x_{i_1},x_{i_2}],\cdot)\otimes x_{i_3}+\kappa([x_{i_3},x_{i_1}],\cdot)\otimes x_{i_2}. 
\]   
Thanks to Lemma \ref{corank}, we conclude that corank of $I_{Y,\bor} / I_{Y,\bor}^2 \rightarrow \Omega_{\Gr,\bor} \otimes \Or_{Y,\bor}$ is less than of equal to $d$, so $\rk \Omega_{Y,\bor} \leq d$, and the result follows.  
\end{proof}

A consequence of Theorem \ref{smooth} is that $Y$ is isomorphic to the wonderful compactification. The next theorem shows that equations of the wonderful compactification in the grassmannian are linear.

\begin{thm} \label{lineareq}
The equations of $Y$ in $\Gr$ are linear.
\end{thm}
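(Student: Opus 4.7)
The plan is to show that the twisted ideal sheaf $I_Y(1) \subset \Or_\Gr(1) = \det K^\vee$ (where $\Or_\Gr(1)$ is the Plücker line bundle) is generated, as an $\Or_\Gr$-module, by global sections that form an explicit subspace of linear forms on $\bigwedge^d\g$ built from $w$. This is precisely what it means for $Y$ to be cut out by linear equations in the Plücker embedding.

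Starting from the defining morphism $^t w_1 : \bigwedge^3 K \to \Or_\Gr$, I would twist by $\det K^\vee$ and apply the canonical isomorphism $\bigwedge^3 K \otimes \det K^\vee \cong \bigwedge^{d-3} K^\vee$ (valid for any rank-$d$ locally free sheaf) to rewrite it as a morphism
$$\varphi : \bigwedge\nolimits^{d-3} K^\vee \longrightarrow \det K^\vee = \Or_\Gr(1).$$
A short computation in a local frame $e_1,\dots,e_d$ of $K$ with dual frame $e^1,\dots,e^d$ identifies $\varphi$ fibrewise with wedge multiplication by the section $w_1 \in H^0(\Gr, \bigwedge^3 K^\vee)$: concretely, $\varphi(\alpha) = \alpha \wedge w_1$.

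Next I would precompose $\varphi$ with the surjection $\bigwedge^{d-3}\g^\vee \otimes \Or_\Gr \twoheadrightarrow \bigwedge^{d-3} K^\vee$ obtained from the dual of the tautological inclusion $K \hookrightarrow \g \otimes \Or_\Gr$. Since this precomposition is surjective, the image of the composite
$$\bigwedge\nolimits^{d-3}\g^\vee \otimes \Or_\Gr \longrightarrow \Or_\Gr(1)$$
is still $I_Y(1)$. By $\Or_\Gr$-linearity this composite is determined by its action on global sections, which under the identification $H^0(\Gr,\Or_\Gr(1)) = \bigwedge^d \g^\vee$ is precisely
$$\bigwedge\nolimits^{d-3}\g^\vee \longrightarrow \bigwedge\nolimits^d \g^\vee, \qquad \alpha \longmapsto \alpha \wedge w.$$

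It follows that $I_Y(1)$ is globally generated by the family $\{\alpha \wedge w : \alpha \in \bigwedge^{d-3}\g^\vee\}$ of linear forms on $\bigwedge^d\g$, so $Y$ is defined inside the Plücker embedding $\Gr \hookrightarrow \mathbb{P}(\bigwedge^d \g)$ by linear equations. The only nontrivial step is the fibrewise identification of $\varphi$ with wedge-with-$w_1$; this reduces to a one-line identity on exterior powers of a single $d$-dimensional vector space, and is the main technical check, though hardly an obstacle.
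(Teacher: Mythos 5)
Your proof is correct and follows the same approach as the paper: both arguments twist $^t w_1$ by $\Or_\Gr(1)=\bigwedge^d K^\vee$, identify $\bigwedge^3 K(1)$ with $\bigwedge^{d-3}K^\vee$, and use the surjection $\bigwedge^{d-3}\g^\vee\otimes\Or_\Gr\twoheadrightarrow\bigwedge^{d-3}K^\vee$ from the tautological sequence to conclude that $I_Y(1)$ is globally generated. Your extra step of identifying the map on global sections concretely as $\alpha\mapsto\alpha\wedge w$ is a useful clarification (the paper defers this observation to the discussion around Proposition~\ref{inclsect}, where it calls this map $\delta$), but the core argument is identical.
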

\begin{proof}
Recall that $\bigwedge^{d} K = \Or_\Gr (-1)$, so $\mathrm{Hom}\left(\bigwedge^{d} K, \bigwedge^{3} K \right) \simeq \bigwedge^{d-3} K^\vee$, and so $\bigwedge^{3} K(1)\simeq \bigwedge^{d-3} K^\vee$. Moreover, from \eqref{tauto}, we have $\bigwedge^{d-3}\g^\vee \otimes \Or_\Gr \twoheadrightarrow \bigwedge^{d-3} K^\vee$. This forces $\bigwedge^{d-3} K^\vee$ to be spanned by its sections, and so does it to $\bigwedge^{3} K(1)$. Thanks to the morphism $^t w_1$, $I_{Y}(1)$ is spanned by its sections.   
\end{proof}

We give a result on global sections of $I_{Y}(1)$ when $\g$ is a simple Lie algebra. Extending $w \, : \, \bigwedge^3\g \rightarrow \co$ to $\bigwedge^{k+3} \g \rightarrow \bigwedge^k \g$ with $k$ a positive integer, we build a $\g$-invariant differential operator on $\bigwedge \g $, denoted by $\delta^* $, satisfying $(\delta^*)^2=0$. On the other side, by identifying $\g$ and his dual by the Killing form, $w$ can be seen as an element of $\bigwedge^3 \g$, the morphism of $\g$-module $\bigwedge^{k} \g \stackrel{\wedge w}{\rightarrow} \bigwedge^{k+3} \g$ with $k$ a non negative integer defines another $\g$-invariant differential operator on $\bigwedge \g$, denoted by $\delta$.   
\\
Let $\h$ be a Cartan subalgebra of $\g$, $\Phi$ be the root system of $(\g,\h)$. We choose a base of the root system $\Phi$, and denote by $\Gamma_{2\rho}$ the irreducible representation of highest weight $2\rho$, where $\rho$ is the half sum of positive roots. The module $\bigwedge \h \otimes \Gamma_{2\rho}$ represents all occurrences of $\Gamma_{2\rho}$ in $\bigwedge \g$. Thus, $\Gamma_{2\rho}$ appears only in $\bigwedge^k \g$, when $k$ ranges $\{g-d,g-d+1,\ldots,d-1,d\}$. For $k$ a non negative integer, let $\overline{\wedge}^k \g$ be a $\g$-submodule of $\bigwedge^k \g$ such that $\bigwedge^k \g=\overline{\wedge}^k \g \oplus \bigwedge^{k-n+d} \h \otimes \Gamma_{2\rho}$ if $k\in\{g-d,g-d+1,\ldots,d-1,d\}$ and $\overline{\wedge}^k \g:=\bigwedge^k \g$ otherwise. Thanks to the fact that $\delta$ and $\delta^*$ preverse the weights, we can defined the restriction of $\delta$, $\delta^*$ to $\overline{\wedge} \g$. Moreover the restrictions of $\delta$ and $\delta^{*}$ to $\bigwedge \h \otimes \Gamma_{2\rho}$ are trivial.

\begin{lem}\label{exactseq}
Assume $\g$ is simple. The sequences  
\[
0 \rightarrow \overline{\wedge}^{m'} \g \stackrel{\delta}{\longrightarrow} \overline{\wedge}^{m'+3} \g \stackrel{\delta}{\longrightarrow} \cdots \stackrel{\delta}{\longrightarrow} \overline{\wedge}^{g-3} \g \stackrel{\delta}{\longrightarrow} \bigwedge^{g} \g \rightarrow 0,
\]
\[
0 \rightarrow \co \stackrel{\delta}{\longrightarrow} \overline{\wedge}^{3} \g \stackrel{\delta}{\longrightarrow} \cdots \stackrel{\delta}{\longrightarrow} \overline{\wedge}^{m-3} \g \stackrel{\delta}{\longrightarrow} \overline{\wedge}^{m} \g \rightarrow 0,
\]
with $m\in\{g-2,g-1,g\} $ and $m' \in \{ 0,1,2\}$, are exact.
\end{lem}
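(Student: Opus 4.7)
The plan is to apply Hodge theory to the complex $(\bigwedge^\bullet \g, \delta)$, of which the two displayed sequences are the pieces obtained by restricting to the appropriate residue class of the degree modulo $3$. First observe that $\delta^2 = 0$, because $\omega \wedge \omega = 0$ (since $\omega$ has odd degree $3$), and dually $(\delta^*)^2 = 0$. Choose a $G$-invariant positive-definite Hermitian inner product on $\g$ coming from a compact real form and extend it multiplicatively to $\bigwedge \g$. Up to a positive scalar, $\delta^*$ is then the Hilbert-space adjoint of $\delta$, so the Laplacian $\Delta := \delta\delta^* + \delta^*\delta$ is self-adjoint, non-negative, and $\g$-equivariant. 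Standard Hodge theory yields, in each degree, the orthogonal decomposition
\[
\bigwedge^k \g = \ker\Delta \; \oplus \; \delta(\bigwedge^{k-3} \g) \; \oplus \; \delta^*(\bigwedge^{k+3} \g),
\]
and identifies the cohomology of $(\bigwedge^\bullet \g, \delta)$ in degree $k$ with $\ker \Delta \cap \bigwedge^k \g$.

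The crux is to show $\ker \Delta = \bigwedge \h \otimes \Gamma_{2\rho}$. Being $\g$-equivariant, $\Delta$ preserves the decomposition of $\bigwedge \g$ into $\g$-isotypic components. On the $\Gamma_{2\rho}$-isotypic part---which by Kostant's theorem is exactly $\bigwedge \h \otimes \Gamma_{2\rho}$---both $\delta$ and $\delta^*$ vanish (as already recorded in the text preceding the lemma), so $\Delta = 0$ there. For the converse, one needs $\Delta$ strictly positive on each isotypic component $V_\lambda$-isotypic with $\lambda \neq 2\rho$. The cleanest route is through the Clifford algebra $\Clif$: identify $\delta + \delta^*$ with left Clifford multiplication by $\omega + \tilde{\omega}$ on the spin module $\bigwedge \g$, where $\tilde{\omega} \in \bigwedge^3 \g^\vee$ is the Killing-dual of $\omega$, and expand $(\delta+\delta^*)^2$. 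The resulting Kostant-type cubic Dirac identity reads, up to scale, $\Delta = c_{2\rho}\cdot \mathrm{id} - \mathrm{Cas}_\g$, where $c_\mu := (\mu, \mu + 2\rho)$. Since every weight of $\bigwedge \g$ is a sum of distinct roots, and $\|\mu + \rho\|^2 = c_\mu + (\rho,\rho)$ is maximized uniquely at $\mu = 2\rho$ (where $\mu+\rho = 3\rho$), the inequality $c_\lambda < c_{2\rho}$ holds for every other highest weight $\lambda$ appearing in $\bigwedge \g$, forcing $\Delta > 0$ off the $\Gamma_{2\rho}$-isotypic.

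Consequently $\ker \Delta = \bigwedge \h \otimes \Gamma_{2\rho}$, so the complementary complex $(\overline{\wedge}^\bullet \g, \delta)$ carries no non-trivial harmonics and is therefore acyclic. Restricting this acyclic complex to the residue class mod $3$ containing $0$ produces the second displayed sequence (using that $\overline{\wedge}^0 \g = \co$, since $0 \notin \{(g-l)/2, \ldots, (g+l)/2\}$), while restricting to the class containing $g$ produces the first (using $\overline{\wedge}^g \g = \bigwedge^g \g$, since $g$ lies outside the same range for $\g$ non-abelian). The required boundary conditions at the two ends of each sequence are automatic once the terms themselves are identified.

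The main obstacle will be the Kostant-type computation inside $\Clif$: verifying that Clifford multiplication by $\omega + \tilde{\omega}$ really agrees, term-by-term in contractions, with $\delta + \delta^*$ acting on $\bigwedge \g$, and then showing that the square reduces to a Casimir plus a constant. Once this is in hand, the maximization of $c_\lambda$ over highest weights of $\bigwedge \g$ and the reading-off of the two acyclic sub-complexes from $(\overline{\wedge}^\bullet\g, \delta)$ are routine.
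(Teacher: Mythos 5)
Your high-level plan is the same as the paper's: both reduce the exactness claim to the single operator identity $\zeta := \delta\delta^* + \delta^*\delta = \delta^*(w)\bigl(\mathrm{id} - \tfrac{1}{c_{2\rho}}c\bigr)$ (a scalar times $\mathrm{id}$ minus Casimir), and both invoke the Kostant inequality $c_\lambda < c_{2\rho}$ for dominant $\lambda \lneq 2\rho$ to see that $\zeta$ acts by a nonzero scalar on every isotypic component of $\overline{\wedge}\g$, from which $\ker\delta\cap\overline{\wedge}^k\g = \mathrm{Im}\,\delta\cap\overline{\wedge}^k\g$ follows directly (as in the paper: if $\zeta x = \lambda x$ with $\lambda\neq 0$ and $\delta x = 0$, then $x = \delta(\tfrac{1}{\lambda}\delta^* x)$). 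Your Hodge-theoretic packaging (compact form, Hermitian adjoint, Hodge decomposition) is a cosmetic variation of this last step and is fine, modulo a routine sign check to confirm the paper's Killing-form $\delta^*$ is, up to a positive scalar, the Hermitian adjoint of $\delta$ on $\bigwedge\g_u$.

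The genuine gap is the step you yourself flag as ``the main obstacle'': you assert the identity $\zeta = c_{2\rho}\cdot\mathrm{id} - \mathrm{Cas}_\g$ (up to scale) as a ``Kostant-type cubic Dirac identity'' but do not prove it, and the citation does not obviously apply. Kostant's cubic Dirac operator sits inside $\mathrm{Cl}(\g,\kappa)$ and acts on a $2^{\lfloor g/2\rfloor}$-dimensional spinor module, whereas $\delta + \delta^* = \omega + \tilde\omega$ sits inside $\Clif$ and acts on the $2^g$-dimensional module $\bigwedge\g$; the isometric embedding $\g\hookrightarrow\g\oplus\g^\vee$, $v\mapsto v + v^\flat$, does \emph{not} carry Kostant's cubic element to $\omega + \tilde\omega$ (its image has mixed terms such as $v_1 v_2^\flat v_3^\flat$), so Kostant's $D^2$ formula is not directly usable. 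This identity is precisely where all the work of the lemma lives. The paper proves it by a different route: it observes $\delta\in F^0$, $\delta^*\in F^3$, hence $\zeta = [\delta,\delta^*]\in F^2$ in the filtration of $\Clif$ by number of contractions; it uses Lemma~\ref{wedge2} (the case-by-case decomposition of $\bigwedge^2\g$ for each simple $\g$) to show that $c,\mathrm{id},e,e^2$ span the $\g$-invariant degree-preserving operators in $F^2$, so $\zeta = \alpha c + P(e)$ with $\deg P\leq 2$; and then pins down $P$ by evaluating at $1$, at $w$, and by the symmetry $\bigwedge^k\g\simeq\bigwedge^{g-k}\g$, which forces $P(0)=P(3)=P(g-3)=P(g)$ and hence $P$ constant, and $\alpha$ by applying $\zeta$ to the highest weight vector of $\Gamma_{2\rho}$. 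You would need to either supply a correct reference for the identity in your setting, or reproduce something like this filtration-plus-evaluation argument.
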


\textit{Remarks.} We could write sequences with $\delta^*$ decreasing wedge power of $\g$, which gives other exact sequences for $\overline{\wedge} \g$.

The complex $(\bigwedge \g, \delta)$ is a direct sum of two complex, the first $\overline{\wedge} \g $ is acyclic, and the second given by $\bigwedge \h \otimes \Gamma_{2\rho}$ with $\h$ a Cartan subalegra of $\g$, is trivial.

\medskip

Assume for the moment this lemma. In the proof of Theorem \ref{lineareq}, $ ^t w_1:\bigwedge^3 K(1) \rightarrow \Or_\Gr (1)\simeq \bigwedge^{d} K $ gives a $\g$-invariant morphism on global sections $\bigwedge^{d-3} \g = \mathrm{H}^0 (\bigwedge^3 K(1)) \rightarrow \mathrm{H}^0 (\Or_\Gr(1))=\bigwedge^{d} \g $, it is just $\delta$. 

\begin{prop} \label{inclsect}
If $\g$ is simple, then $\mathrm{H}^0 (I_Y(1))$ contains $\delta \left(\bigwedge^{d-3} \g \right)$. 
\end{prop}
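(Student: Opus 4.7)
Since $I_Y(1)$ is by construction the image of $^tw_1(1):\bigwedge^3 K(1)\to \Or_\Gr(1)$, the induced map on global sections has its image automatically contained in $\mathrm{H}^0(I_Y(1))$. The whole content of the proposition is therefore to identify that induced map with $\delta$, as is announced in the paragraph preceding the statement. The plan is to unpack this identification in three short steps: a fibrewise description, a global-sections computation, and the Killing-form duality.

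For the first step I would use the isomorphisms $\Or_\Gr(1)\simeq \bigwedge^d K^\vee$ and $\bigwedge^3 K(1)\simeq \bigwedge^{d-3} K^\vee$ already invoked in the proof of Theorem \ref{lineareq}, and verify that under these identifications $^tw_1(1)$ reads fibrewise as $\alpha\mapsto \alpha\wedge w|_V$, where $V\in \Gr$ and $w|_V\in \bigwedge^3 V^\vee$ is the restriction of $w$. This amounts to a direct computation using the fact that the canonical iso $\bigwedge^3 V\otimes \bigwedge^d V^\vee\simeq \bigwedge^{d-3} V^\vee$ is given by contraction and that $^tw_1$ is itself contraction against $w$.

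For the second step, the tautological surjection $\g^\vee\otimes \Or_\Gr\twoheadrightarrow K^\vee$ produces a natural restriction map $\bigwedge^{d-3}\g^\vee\to \mathrm{H}^0(\bigwedge^{d-3}K^\vee)$ given by $\alpha\mapsto (V\mapsto \alpha|_V)$. Composing with the global sections of $^tw_1(1)$ and using the Pl\"ucker identification $\mathrm{H}^0(\Or_\Gr(1))=\bigwedge^d\g^\vee$, one sees that $\alpha\in\bigwedge^{d-3}\g^\vee$ is sent to the section induced by $\alpha\wedge w\in \bigwedge^d \g^\vee$. Third, the Killing form provides a $G$-equivariant isomorphism $\g^\vee\simeq \g$ sending the invariant form $w\in\bigwedge^3\g^\vee$ to the element $w\in\bigwedge^3\g$ used to define $\delta$. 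Under this identification the previous map becomes $\alpha\mapsto \alpha\wedge w=\delta(\alpha)$ on $\bigwedge^{d-3}\g$, and its image is contained in $\mathrm{H}^0(I_Y(1))$ by the opening observation, which proves the proposition.

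The only delicate point is the fibre-level check of the first step, where the duality $\bigwedge^3 V\otimes \bigwedge^d V^\vee\simeq \bigwedge^{d-3} V^\vee$ and the $\kappa$-duality $\g\simeq\g^\vee$ must be composed with the correct signs and normalisations; once this bookkeeping is in place, the result is essentially formal. I note in passing that simplicity of $\g$ is not actually needed for the containment claimed here; it only enters later, via Lemma \ref{exactseq}, when one wishes to extract finer information about the image of $\delta$.
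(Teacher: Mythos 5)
Your proposal is correct and follows essentially the same route as the paper: the paper's entire stated proof is the one-line observation that the image of $^t w_1:\bigwedge^3 K(1)\to \Or_\Gr(1)$ is $I_Y(1)$, with the identification of the induced global-sections map with $\delta$ only asserted in the paragraph preceding the proposition; you have simply made that identification explicit (the fibrewise contraction formula, the restriction map on global sections, and the $\kappa$-duality bookkeeping), and your closing remark that simplicity of $\g$ is not actually used for this containment is also accurate.
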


\begin{proof}
The image of $ ^t w_1 : \bigwedge^3 K(1) \rightarrow \Or_\Gr (1)$ is $I_Y(1)$.
\end{proof}

This proposition shows that we can embed the wonderful compactification in a projective space with dimension smaller than $\mathbb{P}(\bigwedge^d \g)$. 

\medskip

Now we prove Lemma \ref{exactseq}. The main idea is the study of $\zeta=\delta^* \delta + \delta \delta^*$ as a $\g$-invariant differential operator. The multiplication by an element of $\g$ and the derivation (action by an element of $\g^\vee$) spans the ring of differential operators on $\bigwedge \g $ identified to the Clifford algebra $\Clif$ where $ev$ is the duality bracket. Recall that $\Clif$ has a $\mathbb{Z}/2\mathbb{Z}$-graduation, which allows us to put a structure of Lie superalgebra. 
\\
Define a filtration $(F^i)$ with $F^i$ spanned by products of multiplications and at most $i$ derivations. We recall two useful results:
\begin{enumerate}[1.]
\item  $[F^i,F^j]\subset F^{i+j-1}$,
\item an element $\chi$ of $F^i$ is null if $\chi_{|_{\wedge^k \g}}=0$; for $k\leq i$, in other words, elements of $F^i$ are completly known by the image of $\bigoplus_{k\leq i} \bigwedge^k \g $. 
\end{enumerate}

For our case, $\delta \in F^0$ and $\delta^* \in F^3$ so $\zeta=[\delta,\delta^*] \in F^2$. The Casimir operator $c$ and powers of Euler operator $e$, $e^0=\mathrm{id}$, $e$, $e^2$ ($e$ is defined as $e_{|_{\wedge^i \g}}= i\cdot\mathrm{id}$) are $\g$-invariant differential operators in $F^2$. We need the following lemma to prove that $\zeta$ is a linear combination of $c$, id, $e$ and $e^2$.

\begin{lem} \label{wedge2}  
Denote by $\Gamma_{a_1,\ldots,a_l}$ the irreducible representation of $\g$ with highest weight $a_1 \omega_1 + \cdots + a_l \omega_l$, where $\omega_1, \ldots, \omega_l$ are the fundamental weights. We have:
\begin{enumerate}[(i)]
\item $\bigwedge^2 \mathfrak{sp}(2n) = \mathfrak{sl}(n+1) \oplus \Gamma_{2,0,\ldots,0,1,0} \oplus \Gamma_{0,1,0,\ldots,0,2}$, with $n\geq 3$, and $\bigwedge^2 \mathfrak{sl}(3) = \mathfrak{sl}(3) \oplus \Gamma_{3,0} \oplus \Gamma_{0,3}$.
\item $\bigwedge^2 \mathfrak{sp}(2n) = \mathfrak{sp}(2n) \oplus \Gamma_{2,1,0,\ldots,0}$, for $n\geq 2$.
\item $\bigwedge^2 \mathfrak{so}(n) = \mathfrak{so}(n) \oplus \Gamma_{1,0,1,0,\ldots,0} $, for $n\geq 6$.
\item $\bigwedge^2 \mathfrak{f}_4 = \mathfrak{f}_4 \oplus \Gamma_{0,1,0,0}$.
\item $\bigwedge^2 \mathfrak{g}_2 = \mathfrak{g}_2 \oplus \Gamma_{3,0}$.
\item $\bigwedge^2 \mathfrak{e}_6 = \mathfrak{e}_6 \oplus \Gamma_{0,0,0,1,0,0}$.
\item $\bigwedge^2 \mathfrak{e}_7 = \mathfrak{e}_7 \oplus \Gamma_{0,0,1,0,0,0,0}$.
\item $\bigwedge^2 \mathfrak{e}_8 = \mathfrak{e}_8 \oplus \Gamma_{0,0,0,0,0,0,0,1}$. \qedhere
\end{enumerate}
\end{lem}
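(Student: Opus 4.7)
The plan is to establish each decomposition by case-by-case representation-theoretic analysis. Since $\g$ is simple, the Lie bracket $[\cdot,\cdot]:\bigwedge^2 \g \rightarrow \g$ is a non-zero, and hence surjective, morphism of $\g$-modules, so $\g$ splits off $\bigwedge^2 \g$ as a direct summand by complete reducibility; the task is to describe the complementary module $U$.

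First I would pin down the candidate highest weights of $U$. Writing $\g = \h \oplus \bigoplus_{\alpha\in\Phi}\g_\alpha$ with highest root $\theta$, any weight of $\bigwedge^2 \g$ is of the form $\mu+\nu$ for two distinct weights of $\g$, so the extremal weights sit among $\theta+(\theta-\alpha)$ for simple roots $\alpha$ with $\theta-\alpha\in\Phi$. For any such $\alpha$ the vector $x_\theta\wedge x_{\theta-\alpha}$ is a highest weight vector: acting by a positive root $\beta$ sends it to $x_\theta\wedge[x_\beta,x_{\theta-\alpha}]$, and this vanishes because the coefficient of $\beta$ in any root cannot exceed its coefficient in $\theta$. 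The simple roots $\alpha$ with $\theta-\alpha\in\Phi$ are exactly those adjacent to $-\theta$ in the affine Dynkin diagram: in type $A_n$ both endpoints $\alpha_1$ and $\alpha_n$ qualify, producing the two summands in item~(i), while for $B_n$, $C_n$, $D_n$, $G_2$, $F_4$, $E_6$, $E_7$, $E_8$ exactly one simple root qualifies and produces the single extra component in items~(ii)--(viii). Translating the weight $2\theta-\alpha$ into the basis of fundamental weights via the Cartan matrix reproduces the labels stated in the lemma, with the case $A_2$ requiring separate inspection because $\Gamma_{2,0,\ldots,0,1,0}$ and $\Gamma_{0,1,0,\ldots,0,2}$ collapse respectively to $\Gamma_{3,0}$ and $\Gamma_{0,3}$.

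To finish I would verify that $\g\oplus U$ exhausts $\bigwedge^2 \g$, i.e.\ that no further irreducible summand has been missed. This reduces to a dimension count via the Weyl dimension formula: one checks $\dim\g+\dim U=\binom{g}{2}$ in each case, with $U=U_1\oplus U_2$ in type~$A$. For the classical families this is a short calculation with binomial coefficients, and for the exceptional types the dimensions of the proposed extra components (for instance $\dim\Gamma_{0,1,0,0}=1274$ in $F_4$, giving $52+1274=\binom{52}{2}$) are tabulated in standard references. The main obstacle is precisely this exhaustion step: the extremal weight analysis only furnishes lower bounds on the irreducible constituents, and without the dimension check one has no a priori reason to rule out further components or higher multiplicities, so the verification must be carried out carefully for each of the eight families.
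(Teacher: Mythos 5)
Your proposal is correct but takes a genuinely different route from the paper. The paper handles items (i)--(iii) by explicit tensor-product gymnastics: it realizes $\mathfrak{sl}(n+1)$ inside $V\otimes V^\vee$ and expands $\bigwedge^2(V\otimes V^\vee)$, and realizes $\bigwedge^2\mathfrak{sp}(2n)$ and $\bigwedge^2\mathfrak{so}(n)$ as irreducible $GL$-modules of partitions $(3,1)$ and $(2,1,1)$ respectively, then applies the branching rules of \cite{FH}; for the exceptional cases (iv)--(viii) the paper simply invokes the software package \emph{LIE}. Your argument is instead uniform across all simple types: you note that the bracket splits off one copy of $\g$ (and only one, since $[\cdot,\cdot]$ is the unique antisymmetric invariant pairing $\g\otimes\g\to\g$), identify the remaining extremal weights as $2\theta-\alpha$ for those simple roots $\alpha$ attached to the affine node, observe that $x_\theta\wedge x_{\theta-\alpha}$ is a genuine highest-weight vector, and then certify exhaustion by comparing $\dim\g+\dim U$ with $\binom{g}{2}$ via the Weyl dimension formula. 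What your approach buys is a conceptual explanation of why type $A$ is exceptional (two affine-adjacent simple roots) and a derivation that treats $\mathfrak{g}_2,\mathfrak{f}_4,\mathfrak{e}_6,\mathfrak{e}_7,\mathfrak{e}_8$ on the same footing as the classical types rather than delegating them to a computer; what the paper's approach buys is that, for the classical families, the branching identities directly produce a decomposition into irreducibles with no separate exhaustion step needed. You correctly flag the exhaustion step as the place where care is required in your route: the extremal-weight analysis alone gives only a lower bound on the constituents, and the Weyl-formula dimension check (e.g.\ $52+1274=\binom{52}{2}$ for $\mathfrak{f}_4$) must actually be performed for each family.
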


\begin{proof}
We treat only $\mathfrak{sl}(n+1)$, $\mathfrak{sp}(2n)$ and $\mathfrak{so}(n)$. Other cases are given by computation with a program named \textit{LIE}.
\begin{enumerate}[(i)]
\item Write $V=\co^{n+1}$. We have $V \otimes V^\vee=\mathfrak{sl}(n+1) \oplus \co$, so
\begin{align*}
\bigwedge^2 \left(V \otimes V^\vee  \right) &= \bigwedge^2 V \otimes \mathrm{Sym}^2 V^\vee  \oplus \mathrm{Sym}^2 V \otimes \bigwedge^2 V^\vee \\
\bigwedge^2 \mathfrak{sl}(n+1) \oplus \mathfrak{sl}(n+1) &= \Gamma_{0,1,0,\ldots,0} \otimes \Gamma_{0,\ldots,0,2} \oplus \Gamma_{0,\ldots,0,1,0} \otimes \Gamma_{2,0,\ldots,0}.
\end{align*}
The result follows from Proposition 15.25 in \cite{FH}.
\item  As a representation of $GL(\co^{2n})$, $\bigwedge^2 \mathfrak{sp}(2n)=\bigwedge^2 \mathrm{Sym}^2(\co^{2n})$ is irreducible of partition $(3,1)$, so using branching rules in \cite{FH}, we obtain the wished formula.
\item The $GL(\co^{n})$-module $\bigwedge^2 \mathfrak{so}(n)$ is irreducible of partition $(2,1,1)$; we conclude with branching rules.\qedhere
\end{enumerate}
\end{proof}

Except for $\mathfrak{sl}(n)$, the $\g$-module $\co\oplus \g \oplus \bigwedge^2 \g$ has four irreducible factors: $c$, id, $e$, $e^2$ form a bases of $\g$-invariant differential operators of $F^2$, so $\zeta$ is a linear combination of $c$, id, $e$ and $e^2$. 
\\
For $\mathfrak{sl}(n)$, $n\geq 3$, remark that $\bigwedge^2 \mathfrak{sl}(n)=\mathfrak{sl}(n)\oplus W \oplus W^\vee$, with $W=\Gamma_{2,0,\ldots,0,1,0}$ or $W=\Gamma_{3,0}$, and $\zeta$, $c$, id, $e$, $e^2$ do not distinguish an irreducible representation and its dual. Considering $W\oplus W^\vee$ as one factor, $\co\oplus \g \oplus \bigwedge^2 \g$ has four factors. We can treat $\mathfrak{sl}(n)$ as other simple Lie algebras.  

\medskip

There exist a scalar $\alpha$ and a polynomial $P$ of degree less than or equal to $2$ such that $\zeta-\alpha c =P(e)$. Applying this expression on $1\in \co$ and $w\in \bigwedge^3 \g$, it follows that $P(0)=P(3)=\delta^*(w)$. But the isomorphism $\bigwedge^k \g \simeq \bigwedge^{n-k} \g$ shows that $P(n-3)=P(n)=P(3)$. Finally, $P$ is constant, thus
\begin{equation} \label{linzeta}
\zeta= \alpha c + \delta^*(w)\mathrm{id}. 
\end{equation}
If $\Gamma_\lambda$ is an irreducible representation of highest weight $\lambda$, denote by the scalar $c_\lambda$ the action of $c$ on $\Gamma_\lambda$. So, applying \eqref{linzeta} to the highest weight vector of $\Gamma_{2\rho}$, we have $0=\alpha c_{2\rho} + \delta^*(w)$, and so 
\[
\zeta=\delta^*(w)\left(\mathrm{id}-\frac{1}{c_{2\rho}}c\right). 
\] 
A Kostant argument says that $c_\lambda < c_{2\rho}$, if $2\rho$ dominates the dominant weight $\lambda$. Moreover, an irreducible $\g$-module $\Gamma$ which appears in $\bigwedge \g$ has highest weight dominated by $2\rho$: the restriction of $\zeta$ to $\Gamma$ is just the multiplication by a non-zero scalar, except for $\Gamma_{2\rho}$.
 
\begin{proof}[Proof of Lemma \ref{exactseq}]
Let $k$ be a positive integer, and $\Gamma$ be an irreducible representation which appears in Ker$(\delta)\cap \overline{\wedge}^k \g$, $\zeta_{|_{\Gamma}}=\lambda \mathrm{id}_\Gamma$ with $\lambda\neq 0$. This statement implies that if $x\in \Gamma $ then $x=\delta \left( \frac{1}{\lambda} \delta^* (x)\right) \in \mathrm{Im}(\delta)$. So all irreducible representations appearing in $\mathrm;{Ker}(\delta)\cap \overline{\wedge}^k \g$ are subsets of $\mathrm{Im}(\delta)\cap \overline{\wedge}^k \g$, so $\mathrm{Ker}(\delta)\cap \overline{\wedge}^k \g=\mathrm {Im}(\delta)\cap \overline{\wedge}^k \g$. This forces the two sequences to be exact. 
\end{proof}

\section{Correspondence between orbits of $Y$ and sets of simple roots}  \label{section3bis}

The results on orbits of $Y$ agree with the nice properties of the wonderful compactification. Let $\h$ be a Cartan subalgebra of $\g$ such that $\sigma_{|_\h}=-\mathrm{id}_\h$. It follows that $\sigma(\Phi)=-\Phi$ where $\Phi$ is the root system of $(\g,\h)$.
  
\begin{enumerate} 
\item[1.] Denote by $\mathfrak{p}_i$ the parabolic subalgebra spanned by a Borel subalgebra $\bor$ such that $\h \subset \bor$ and by $\g_{-\alpha_j}$ where $j$ runs over $\{\alpha_1,\dots,\alpha_{i-1},\alpha_{i+1},\dots,\alpha_{l}\}$, with $\Delta=\{\alpha_1,\dots,\alpha_{l}\}$ a root bases with respect to $\bor$.
The closure orbits corresponding to $\mathfrak{p}_i$ are the $l$ hypersurfaces $S_{\alpha_i}$.    
\item[2.] The closure orbits are $S_I =\cap_{\alpha \in I} S_\alpha$ where $I\subset \Delta$.
\end{enumerate} 

Now, we explain the correspondence between orbit closures and subsets of $\Delta$. Denote by $\bor^-$ the Borel subalgebra spanned by $\h$ and $x_{-\alpha}$ (with $\alpha\in\Delta$), $\mathfrak{n}=[\bor^- , \bor^- ]$, $N$ the adjoint group of $\mathfrak{n}$, and $T$ the maximal torus of $G$ with Lie algebra equal to $\h$. Let
\[
Y_\h =\{ U\in Y \; \mathrm{such \, that} \ U\cap \bor^- = \h \}.
\]   
The last remark of Section \ref{section1} shows that 
\[
\begin{array}{cccccc}
p \, : & \co^\Delta & \rightarrow & Y_\h \\
       & (t_\alpha)_{\alpha \in \Delta} & \mapsto & \h \oplus \bigoplus_{\alpha\in \Phi^+} (x_{\alpha}+t_\alpha x_{-\alpha})
\end{array}       
\] 
is a $T$-equivariant isomorphism, so $\dim Y_\h =l$ (the action of $T$ on $\co^\Delta$ is defined by $e^h.(t_\alpha)_{\alpha \in \Delta}=(e^{\alpha(h)}t_\alpha)_{\alpha \in \Delta}$). We prove that there is a correspondence between $G$-orbit closures of $Y$ and $T$-orbit closures of $\co^\Delta$, which are the $\co^I$, where $I$ is a subset of $\Delta$ (in fact, it is easy to show that $T$-orbits have the form $(\co^{*})^I$ with $I$ a subset of $\Delta$).   

\begin{prop} \label{immersion}
The morphism 
\[
\begin{array}{cccc}
\psi \, : & N\times Y_\h & \rightarrow & Y \\
       & (n,U) & \mapsto & n.U
\end{array}       
\] 
is an open immersion. 
\end{prop}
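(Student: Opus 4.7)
The plan is to establish that $\psi$ is an injection between smooth irreducible varieties of the same dimension $d$, and to deduce via Zariski's Main Theorem that it is an open immersion. The dimension count is immediate: by the $T$-equivariant isomorphism $p : \co^\Delta \cong Y_\h$ we have $\dim Y_\h = l$, and $\dim N = (g-l)/2$, so $\dim(N \times Y_\h) = d = \dim Y$ by Corollary \ref{csgofmax}(c) and Theorem \ref{smooth}.

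The central step is injectivity. Suppose $n_1 \cdot U_1 = n_2 \cdot U_2$ with $n_i \in N$ and $U_i \in Y_\h$, and set $n = n_2^{-1} n_1 \in N$; then $n \cdot U_1 = U_2$ and it suffices to show $n = 1$. Since $N$ is contained in the adjoint group of $\bor^-$, the subalgebra $\bor^-$ is $\mathrm{Ad}(n)$-stable; combined with $\h \subset U_1$ and the defining condition of $Y_\h$, one obtains
\[
\mathrm{Ad}(n)(\h) \subset \mathrm{Ad}(n)(U_1) \cap \bor^- = U_2 \cap \bor^- = \h,
\]
hence $\mathrm{Ad}(n)(\h) = \h$ by a dimension count. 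Writing $n = \exp \xi$ with $\xi \in \mathfrak{n}$, and using that $\mathrm{ad}(\xi)$ sends $\h$ into $\mathfrak{n}$ and preserves $\mathfrak{n}$, the expansion $\mathrm{Ad}(n)(h) - h = [\xi, h] + \tfrac{1}{2}[\xi, [\xi, h]] + \cdots$ lies in $\mathfrak{n}$ for every $h \in \h$. Together with $\mathrm{Ad}(n)(h) \in \h$ and $\h \cap \mathfrak{n} = 0$, this forces $\mathrm{Ad}(n)(h) = h$ on all of $\h$. Therefore $n$ centralises $T$ pointwise, so $n \in Z_G(T) \cap N = T \cap N = \{1\}$, and consequently $U_1 = U_2$.

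To conclude, injectivity plus the dimension equality make $\psi$ a quasi-finite dominant morphism between integral varieties of the same dimension; in characteristic zero such a morphism is birational, because its generic fibre is a finite scheme with a single geometric point, hence a reduced point. Zariski's Main Theorem applied to this birational quasi-finite morphism with smooth (and therefore normal) target $Y$ then yields that $\psi$ is an open immersion. The principal obstacle is the injectivity step, which relies on the $\mathrm{Ad}(N)$-stability of $\bor^-$ together with the defining condition $U \cap \bor^- = \h$ for elements of $Y_\h$; the remaining arguments are a dimension count and a standard application of Zariski's Main Theorem.
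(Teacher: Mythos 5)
Your proof is correct and follows essentially the same approach as the paper: show injectivity by exploiting $N$-stability of $\bor^-$ and the defining condition $U \cap \bor^- = \h$, match dimensions to get dominance, and conclude via Zariski's Main Theorem applied to the smooth variety $Y$. The only notable variation is that you establish $n=1$ by proving $n$ actually \emph{centralizes} $\h$ via the exponential expansion, whereas the paper simply cites the fact that the normalizer of a maximal torus contains no nontrivial unipotent elements; both suffice.
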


\begin{proof}
If $n_1$, $n_2$ are in $N$, and $U_1$, $U_2$ are two elements of $Y_\h$ such that $n_1.U_1=n_2.U_2$, then $n_2^{-1}n_1.\h = (n_2^{-1}n_1.U_1)\cap\bor^- =U_2\cap \bor^- =\h$, that is to say $n_2^{-1}n_1 =1$ (the normalizer of a maximal torus contains no unipotent elements), so $U_1=U_2$: $\psi$ is injective. Moreover, $\dim N\times Y_\h =\dim Y$ forces $\psi$ to be dominant, so finally $\psi$ is birational. We use a corollary of the main theorem of Zariski: since $\psi$ is birational, with finite fibres, then, because $Y$ is smooth, $\psi$ is an isomorphism between $X$ and an open subset $U$ of $Y$.
\end{proof}

Let $O$ be an orbit of $Y$ and $U\in O$. Recall that $\mathfrak{p}_U=U+[U,U]$ is a parabolic subalgebra of $\g$, so there exists a subset $S$ of $\Phi^+$ such that $\mathfrak{p}_U$ is conjugate to 
$\mathfrak{p}=\bor \oplus \sum_{\alpha\in -S} \g^\alpha$. Now we build an element $V$ of $Y_\h$ such that $\mathfrak{p}=V+[V,V]$:
\[
V=\h \oplus \bigoplus_{\alpha\in \Phi^+ \smallsetminus S} \co x_\alpha \oplus \bigoplus_{\alpha\in  S} \co (x_\alpha+x_{-\alpha}).
\]
Since $\mathfrak{p}_U$ and $\mathfrak{p}$ are conjugate, Proposition \ref{parabV} implies that $V$ and $U$ are conjugate, so $O\cap Y_\h \neq \emptyset $.

\begin{prop} \label{TGorb}
There is a bijection between the $T$-orbit closures of $Y_\h$ and the $G$-orbit closures of $Y$, defined as follows:
\[
\begin{array}{ccc}
\left\{ G-\mathrm{orbit\, closure\, of\,} Y \right\} & \longleftrightarrow & \left\{ T-\mathrm{orbit\, closure\,  of\,} Y_\h \right\} \\
 \overline{O} & \mapsto & \overline{O}\cap Y_\h.
\end{array} 
 \]
This map preserves intersection. Moreover, $G$-orbit closures are smooth.
\end{prop}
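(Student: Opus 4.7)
The plan is to transport the orbit structure between $T$-orbits on $Y_\h \simeq \co^\Delta$ and $G$-orbits on $Y$ via the open immersion $\psi$ of Proposition \ref{immersion}. First I would analyze a general element $U = p((t_\alpha)_{\alpha \in \Delta}) \in Y_\h$ at the level of the parametrization. Using the relation $w(v_\alpha,v_\beta,v_{\alpha+\beta})=0$ inductively on the height of roots, one sees that $t_\alpha$ is proportional to $\prod_i t_{\alpha_i}^{c_i}$ for $\alpha = \sum_i c_i\alpha_i \in \Phi^+$, so $t_\alpha\neq 0$ exactly when the support of $\alpha$ lies in $I := \{\alpha_i \in \Delta : t_{\alpha_i}\neq 0\}$. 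A short root-space computation then identifies $\mathfrak{p}_U$ with the standard parabolic $\mathfrak{p}_I$ attached to $I$. Since distinct standard parabolics are non-conjugate, Proposition \ref{parabV} shows that the $G$-orbit of $U$ depends only on $I$; combined with the surjectivity discussed just before the proposition, this gives a bijection between the $G$-orbits $O_I$ of $Y$ and the $T$-orbits $p((\co^*)^I)$ of $Y_\h$, indexed by subsets $I\subseteq \Delta$.

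Next I would pass to closures. Because $\overline{O_I}$ is $G$-stable, $\psi^{-1}(\overline{O_I}) = N \times (\overline{O_I}\cap Y_\h)$. The subset $O_I\cap \psi(N\times Y_\h) = \psi(N\times p((\co^*)^I))$ is a nonempty open part of the irreducible variety $O_I$, hence dense in $O_I$; taking closures in the open subvariety $\psi(N\times Y_\h)$ and using injectivity of $\psi$ yields $\overline{O_I}\cap Y_\h = p(\overline{(\co^*)^I}) = p(\co^I)$. This is the desired bijection $\overline{O_I}\mapsto p(\co^I)$. For intersection preservation, one computes $(\overline{O_I}\cap\overline{O_J})\cap Y_\h = p(\co^I)\cap p(\co^J) = p(\co^{I\cap J})$; since $\overline{O_I}\cap\overline{O_J}$ is $G$-stable and closed, it is a union of orbit closures $\overline{O_K}$, each satisfying $p(\co^K)\subseteq p(\co^{I\cap J})$ and hence $K\subseteq I\cap J$, forcing the union to be exactly $\overline{O_{I\cap J}}$.

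Finally, for smoothness I would again use the pullback: $\psi^{-1}(\overline{O_I}) = N\times p(\co^I)\simeq N\times \co^I$ is a product of affine spaces, hence smooth, so $\overline{O_I}$ is smooth on the open subset $\overline{O_I}\cap \psi(N\times Y_\h)$. Since every $G$-orbit meets $Y_\h$, the $G$-translates $g\cdot\psi(N\times Y_\h)$ cover $Y$ and in particular cover $\overline{O_I}$; smoothness extends to every point by $G$-equivariance. The main technical step is the identification $\mathfrak{p}_U = \mathfrak{p}_I$ via the coordinates $(t_\alpha)$ and the companion identity $\overline{O_I}\cap Y_\h = \overline{O_I\cap Y_\h}$, both resting on careful use of the open immersion $\psi$ and the fact that standard parabolics are pairwise non-conjugate.
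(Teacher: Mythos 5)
Your proof is correct, but it takes a genuinely more explicit route than the paper's. The paper argues abstractly: $\overline{O}\cap\psi(N\times Y_\h)$ is a $T$- and $N$-stable closed subset of $\psi(N\times Y_\h)$, hence of the form $\psi(N\times\co^I)$; since it is open and dense in $\overline{O}$, the assignment $\overline{O}\mapsto\overline{O}\cap Y_\h$ is injective, and a counting argument ($2^l$ orbit closures in $Y$, $2^l$ subsets of $\Delta$) upgrades the injection to a bijection. Smoothness is obtained by noting that the singular locus of $\overline{O}$ would be $G$-stable and closed, hence would meet the smooth open chart $N\times\co^I$, a contradiction. You instead compute the parabolic $\mathfrak{p}_U$ attached to each $U=p((t_\alpha))\in Y_\h$ directly, via the monomial relations $t_\alpha\propto\prod_i t_{\alpha_i}^{c_i}$, and identify it with the standard parabolic $\mathfrak{p}_I$ where $I=\{\alpha_i : t_{\alpha_i}\neq 0\}$; this, combined with Proposition \ref{parabV} and the fact that distinct standard parabolics are non-conjugate, pins down $O\cap Y_\h$ as the \emph{single} $T$-orbit $p((\co^*)^I)$ rather than just $\overline{O}\cap Y_\h=p(\co^I)$. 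This is actually a sharper statement than the paper commits to: the remark following Proposition \ref{TGorb} leaves open whether a $G$-orbit might meet several $T$-orbits of $Y_\h$, whereas your parabolic computation rules that out. Your approach also furnishes an explicit proof of intersection preservation (which the paper asserts without detail) and makes the smoothness argument by covering $\overline{O_I}$ with $G$-translates of the chart rather than appealing to the $G$-stability of the singular locus; both are valid. The trade-off: your argument requires the inductive computation of the coefficients $t_\alpha$ and the identification $\mathfrak{p}_U=\mathfrak{p}_I$, while the paper's counting argument bypasses that but yields slightly less information.
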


\textit{Remark}. For a $G$-orbit closure $\overline{O}$, the set $\overline{O}\cap Y_\h$ is a $T$-stable closed set, so is isomorphic to $(\co)^I$ where $I$ is a subset of $\Delta$. But it is not impossible that a $G$-orbit $O$ meets several $T$-orbits of $Y_\h$, so the correspondence of Proposition \ref{TGorb} fails for orbits.

\begin{proof}
Let $\overline{O}$ be an $G$-orbit closure of $Y$. The closed set $\overline{O}\cap \psi(N\times Y_\h)$ of $\psi(N\times Y_\h)$ is $T$-stable and $N$-stable, so there exists a subset $I$ of $\Delta$ such that $\overline{O}\cap \psi(N\times Y_\h)=\psi(N\times \co^I)$. But $\overline{O}\cap \psi(N\times Y_\h)$ is open in $\overline{O}$, so $\overline{O}=\overline{\psi(N\times \co^I)}$. This forces the map $\overline{O}\mapsto \overline{O}\cap Y_\h $ to be an injection. Since $Y$ has $2^l$ orbit closures and the cardinal of $\mathcal{P}(\Delta)$ is equal to $2^l$, the bijection follows. 

\medskip

It is clear that $\overline{O}\cap \psi(N\times Y_\h)\simeq N\times \co^{I}$ is smooth. If the singular set of $\overline{O}$ is non empty, it is a $G$-stable closed set, so meets $\overline{O}\cap \psi(N\times Y_\h)$, which forces the smoothness of $\overline{O}$. 
\end{proof}

\textit{Remarks}. Let $\overline{O}$ be a $G$-orbit closure of $Y$. There exists $I\in \mathcal{P}(\Delta)$ such that $\overline{O}=\overline{\psi(N\times \co^I)}$. 
\begin{enumerate}
\item[(a)] We have codim $\bar{O}_{I}=\sharp (\Delta \smallsetminus I)$.
\item[(b)] For $\alpha\in \Delta$, we define $S_\alpha =\overline{\psi(N\times \co^{\Delta\smallsetminus \{\alpha\}})}$. All closed sets $S_\alpha$ are of codimension one, and the family $\{S_\alpha \, , \, \alpha\in\Delta \}$ meets transversally. 
\end{enumerate}

\section{Examples}

\subsection{The case $\mathfrak{sl}(3)$}

Let $V$ be a vector space of dimension $3$ and $S^2 V$ be the vector space of conics on $V$. The closure $Z$ of the graph of the duality isomorphism in $\mathbb{P}(S^2 V)\times \mathbb{P}(S^2 V^\vee)$, defined as,
\[
\begin{array}{ccc}
\mathbb{P}(S^2 V) & \cdots \rightarrow &  \mathbb{P}(S^2 V^\vee ) \\
  q & \mapsto &  \wedge^2 q, 
\end{array} 
\]
is called the variety of \textit{complete conics}, and the map $p:Z\rightarrow \mathbb{P}(S^2 V)$ is known to be the blowing up of $\mathbb{P}(S^2 V)$ along the Veronese surface (conics of rank one on $V$). We refer to the appendix of \cite{TH} for more results. So, if $J$ is the sheaf of ideals of the Veronese surface in $\mathbb{P}(S^2(V))$, $Z=\mathrm{Proj}(J)$ is embedded in $\mathrm{Proj} \left(\mathrm{H}^0(J(3))\right )=\mathbb{P}(\co\oplus \Gamma_{2\rho})$, where $\Gamma_{2\rho}$ is the irreducible $\mathfrak{sl}(3)$-module of dimension $27$ which corresponds to the irreducible representation of highest weight $2\rho$. We finish our description with the commutative diagram:
\[
\begin{array}{ccc}
Z=P\cap S & \rightarrow & P:=\mathbb{P}(\co\oplus \Gamma_{2\rho}) \\
\downarrow & & \downarrow \\
S:=\mathbb{P}(S^2 V)\times \mathbb{P}(S^2 V^\vee) & \rightarrow & \mathbb{P}(S^2 V\otimes S^2 V^\vee)
\end{array}
\]
\textit{Remark}. The variety $Z$ can be defined as:
\[
Z=\{ ([q\otimes q'])\in \mathbb{P}(S^2 V\otimes S^2 V^\vee) \, \mathrm{such \, that} \ qq' \in \co \mathrm{Id} \}.
\]
We can find easly orbit closures: one when $\rk q =1$, an other when $\rk q' =1$, and their intersection (the closed orbit).

\medskip 

Littelmann and Procesi show that $Z$ is isomorphic to the wonderful compactification of $PGL(3)/PSO(3)$. In this part, we find equations defining the wonderful compactification in $\mathbb{G}(3,\mathfrak{sl}(3))$.

Let $e_1,e_2,e_3$ be a bases of $V$ and consider the quadratic form $q=e_1^2+e_2^2+e_3^2$. The morphism $\sigma:PGL(3)\rightarrow PGL(3)$ which sends $[g]$ to $[q^{-1}\, ^tg^{-1} \, q]$ is an involution, and $PGL(3)^\sigma = PSO(q)$. 
\\
We have seen in Section \ref{section4} that the sheaf of equations of $Y$ in $\mathbb{G}(5,\sl)$ (and so $\bar{X}$ in $\mathbb{G}(3, \sl )$) is the image $I_{\bar{X}}$ of $\bigwedge^3 K \rightarrow \mathcal{O}_{\mathbb{G}(5,\mathfrak{sl}(3))}$.
 So, thanks to Proposition \ref{inclsect}, for $\sl $, $\mathrm{H}^0(I_{\bar{X}}(1))$ is a submodule of the $\mathfrak{sl}(3)$-module $\bigwedge^3\mathfrak{sl}(3)$ which is isomorphic to $\bigwedge^2 \mathfrak{sl}(3)\oplus \co \oplus \Gamma_{2\rho}$, and contains $\bigwedge^2 \mathfrak{sl}(3)$. But two cases are impossible:
\begin{enumerate}[i.]
\item If $\mathrm{H}^0(I_{\bar{X}}(3))=\bigwedge^2 \mathfrak{sl}(3)\oplus \Gamma_{2\rho}$, then $\bar{X}$ satisfies the equations of $\mathbb{P}(\co)$, and so $\bar{X}$ is a point.   
\item If $\mathrm{H}^0(I_{\bar{X}}(3))=\bigwedge^2 \mathfrak{sl}(3)\oplus \co)$, then $\bar{X}\subset \mathbb{P}(\Gamma_{2\rho})$. But elements which are not in the closed orbit of $\bar{X}\subset \mathbb{P}(\bigwedge^3 \g)$ have factors in $\bigwedge^{3} \g$ whose images by $\delta$ are not equal to zero (we refer to Lemma \ref{decomp} to explain it).
\end{enumerate}

Hence, $\mathrm{H}^0(I_{\bar{X}}(3))=\bigwedge^2 \mathfrak{sl}(3)$. In particular, $\bar{X}$ satisfies the equations of $\mathbb{P}(\co \oplus \Gamma_{2\rho}) \subset \mathbb{P}(\bigwedge^3 \mathfrak{sl}(3))$. We summarize the results obtained so far. 

\begin{lem}
The wonderful compactification is the intersection of $\mathbb{P}(\co\oplus \Gamma_{2\rho})$ and $\Gr(3,\g)$ in $\mathbb{P}(\bigwedge^3 \mathfrak{sl}(3))$.
\end{lem}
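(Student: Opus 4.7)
The plan is to package the computation of $\mathrm{H}^0(I_{\bar X}(1))$ carried out in the preceding paragraphs into a projective geometric statement. The one point needing attention is that the argument must deliver the intersection scheme-theoretically (or at least as an equality of subvarieties), not merely as a set-theoretic inclusion.

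First I would recall from Theorem \ref{lineareq} that $I_{\bar X}(1)$ is globally generated. Consequently, inside the Plücker image of $\Gr(3,\mathfrak{sl}(3)) \subset \mathbb{P}(\bigwedge^3 \mathfrak{sl}(3))$, the subvariety $\bar X$ is cut out by the family of hyperplanes parametrized by $\mathrm{H}^0(I_{\bar X}(1))$. Next, I would invoke the identification, just established by excluding cases i.\ and ii.\ above,
\[
\mathrm{H}^0(I_{\bar X}(1)) = \bigwedge^2 \mathfrak{sl}(3),
\]
where $\bigwedge^2 \mathfrak{sl}(3)$ is read as the $\mathfrak{sl}(3)$-invariant complement of $\co \oplus \Gamma_{2\rho}$ in the decomposition $\bigwedge^3 \mathfrak{sl}(3) \simeq \bigwedge^2 \mathfrak{sl}(3) \oplus \co \oplus \Gamma_{2\rho}$ (this decomposition uses the $\bigwedge^2$ computation of Lemma \ref{wedge2} together with Hodge duality).

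Then I would simply read off the projective consequence: the common zero locus inside $\mathbb{P}(\bigwedge^3 \mathfrak{sl}(3))$ of the linear forms sitting in the summand $\bigwedge^2 \mathfrak{sl}(3)$ (viewed inside $\bigwedge^3 \mathfrak{sl}(3)^\vee$ via any $\mathfrak{sl}(3)$-invariant pairing, e.g.\ the Killing form) is, by the direct sum decomposition, exactly the projective subspace $\mathbb{P}(\co \oplus \Gamma_{2\rho})$. Intersecting with $\Gr(3,\mathfrak{sl}(3))$ gives $\bar X = \Gr(3,\mathfrak{sl}(3)) \cap \mathbb{P}(\co \oplus \Gamma_{2\rho})$, as stated.

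There is no serious obstacle at this stage, since the genuine work (determining $\mathrm{H}^0(I_{\bar X}(1))$ via Proposition \ref{inclsect} and the elimination of the two pathological subrepresentations) has already been done. The only mild care concerns checking that the identification of $\bigwedge^2 \mathfrak{sl}(3)$ with the orthogonal complement of $\co \oplus \Gamma_{2\rho}$ in $\mathbb{P}(\bigwedge^3 \mathfrak{sl}(3))$ is consistent with the $\mathfrak{sl}(3)$-equivariance of the map $\bigwedge^3\g \to \mathrm{H}^0(\Or_\Gr(1))$ used in defining the linear equations, but this is immediate from Schur's lemma applied to the isotypic decomposition.
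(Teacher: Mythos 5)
Your argument is correct and follows the same route as the paper: global generation of $I_{\bar X}(1)$ from Theorem~\ref{lineareq}, the equality $\mathrm{H}^0(I_{\bar X}(1))=\bigwedge^2\mathfrak{sl}(3)$ obtained by eliminating cases i.\ and ii., and the observation that the zero locus of $\bigwedge^2\mathfrak{sl}(3)$ inside $\mathbb{P}(\bigwedge^3\mathfrak{sl}(3))$ is the complementary linear space $\mathbb{P}(\co\oplus\Gamma_{2\rho})$. The paper compresses this last step into the phrase ``we summarize the results obtained so far''; you merely make explicit the passage from the computation of global sections to the projective-geometric conclusion, including the point that global generation is what promotes the inclusion to an equality.
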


Since $Z$ and $\bar{X}$ are identified to subvarieties of $\mathbb{P}(\co\oplus \Gamma_{2\rho})$, we can prove that the wonderful compactification is isomorphic to the variety of complete conics.

\begin{prop}
There exists a $PGL(3)$-equivariant automorphism of $\mathbb{P}(\Gamma_{2\rho}\oplus \co)$  which sends $\bar{X}$ to $Z$.
\end{prop}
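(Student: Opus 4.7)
The plan is to deduce the isomorphism from Schur's lemma, exploiting that both $\bar X$ and $Z$ are the closures of the unique open $PGL(3)$-orbit $X \simeq PGL(3)/PSO(3)$ inside $\mathbb{P}(\co \oplus \Gamma_{2\rho})$. Since $\co$ and $\Gamma_{2\rho}$ are non-isomorphic irreducible $PGL(3)$-modules, Schur's lemma implies that every $PGL(3)$-equivariant linear automorphism of $\co \oplus \Gamma_{2\rho}$ has the form $(u,v) \mapsto (au,bv)$ with $a,b \in \co^*$; up to global scalar this yields a one-parameter family $\varphi_\lambda : [u:v] \mapsto [u:\lambda v]$ of equivariant projective automorphisms.

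Fix a base point $x_0 \in X$ stabilized by $PSO(3)$, and let $p_1 \in \bar X$ and $p_2 \in Z$ be its images under the two equivariant embeddings. Each $p_i$ is a $PSO(3)$-fixed point of $\mathbb{P}(\co \oplus \Gamma_{2\rho})$, and the $PSO(3)$-fixed subspace of $\co \oplus \Gamma_{2\rho}$ equals $\co \oplus (\Gamma_{2\rho})^{PSO(3)}$. By the Cartan--Helgason theorem applied to the spherical symmetric pair $(PGL(3),PSO(3))$ at the spherical weight $2\rho$, the space $(\Gamma_{2\rho})^{PSO(3)}$ is one-dimensional, generated by some $v_0$; hence the $PSO(3)$-fixed locus is the line $\ell = \mathbb{P}(\co \oplus \co v_0)$. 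Neither $p_i$ can be $[1:0]$, which is globally $PGL(3)$-fixed and thus incompatible with the $5$-dimensional orbit $X$; nor can either lie in $\mathbb{P}(\Gamma_{2\rho})$, since by irreducibility of $\bar X$ and $Z$ this would force $\bar X \subset \mathbb{P}(\Gamma_{2\rho})$ (excluded as case (ii) in the discussion preceding the lemma) or $Z \subset \mathbb{P}(\Gamma_{2\rho})$ (ruled out because a generic point $[q\otimes q']$ of $Z$ satisfies $qq' \in \co \cdot \mathrm{Id}$ and so has nontrivial trace component under the decomposition $\bigwedge^{3}\sl = \bigwedge^{2}\sl \oplus \co \oplus \Gamma_{2\rho}$). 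The $\co^*$-action by $\varphi_\lambda$ is simply transitive on $\ell \smallsetminus \{[1:0],[0:v_0]\}$, so there is a unique $\lambda$ sending $p_1$ to $p_2$; by equivariance and density of $X$ in both $\bar X$ and $Z$, the same $\varphi_\lambda$ sends $\bar X$ onto $Z$.

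The main obstacle is the sphericity statement $\dim (\Gamma_{2\rho})^{PSO(3)} = 1$. For $\sl$ it can alternatively be checked by decomposing the $27$-dimensional module $\Gamma_{2\rho}$ under $PSO(3) \cong SO(3)$ into odd-dimensional irreducibles and counting trivial summands; the general Cartan--Helgason statement for $(PGL(n),PSO(n))$ supplies a cleaner reference. A secondary check is that the generic point of $Z$ indeed projects non-trivially onto the $\co$-summand, which is immediate from its explicit description as $[q \otimes q']$ with $qq' \in \co \cdot \mathrm{Id}$.
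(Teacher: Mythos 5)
Your proof is correct, and it takes a genuinely different route from the paper's. The paper also begins with the (implicit Schur) observation that it suffices to find \emph{some} $PGL(3)$-equivariant automorphism of $\mathbb{P}(\co\oplus\Gamma_{2\rho})$ matching a single point of the open orbit of $\bar X$ to one of $Z$, but it then proceeds fully explicitly: it builds the equivariant isomorphism $\Psi:\bigwedge^3\sl\to S^2V\otimes S^2V^\vee$ by tensor contractions, computes $\Psi\big(\mathfrak{so}(q)\big)=q\otimes q^\vee+z^2$ where $z=\sum e_i\otimes e_i^\vee$, reads off that the $\co$-component of $q\otimes q^\vee$ is $\tfrac12 z^2$, and finally fixes the scaling $\phi=\mathrm{id}_{\Gamma_{2\rho}}+\tfrac13\mathrm{id}_{\co}$. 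You instead argue structurally: by Schur's lemma the equivariant projective automorphisms form a one-parameter group $\varphi_\lambda$; by sphericity of $(PGL(3),PSO(3))$ the $PSO(3)$-fixed locus is a line $\ell=\mathbb{P}\big(\co\oplus(\Gamma_{2\rho})^{PSO(3)}\big)$ on which $\varphi_\lambda$ acts with exactly two fixed points; and your checks that neither $p_1$ nor $p_2$ is a pole of $\ell$ (using the case analysis preceding the lemma for $\bar X$, and the nonvanishing trace component of $q\otimes q^\vee$ for $Z$) pin down a unique $\lambda$. What the paper's explicit route buys is that it is entirely self-contained and produces the automorphism concretely, without invoking the Cartan--Helgason / spherical-pair fact that $\dim(\Gamma_{2\rho})^{PSO(3)}=1$. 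What your route buys is conceptual transparency and robustness: the argument works verbatim for any identification of the two copies of $\co\oplus\Gamma_{2\rho}$, sidesteps the tensor computation entirely, and makes it manifest that uniqueness of the matching $\varphi_\lambda$ is forced by the geometry of the $PSO(3)$-fixed line. One small presentational caveat: you tacitly assume both $\bar X$ and $Z$ already sit in the \emph{same} $\mathbb{P}(\co\oplus\Gamma_{2\rho})$; strictly one should first fix an arbitrary $PGL(3)$-equivariant identification of the two copies (one inside $\bigwedge^3\sl$, one inside $S^2V\otimes S^2V^\vee$) --- which exists and is unique up to the same $\varphi_\lambda$, so your argument absorbs the ambiguity --- but it is worth saying so.
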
 

\begin{proof} 
It is enough to find a $PGL(3)$-invariant isomorphism of $\mathbb{P}(\co\oplus \Gamma_{2\rho})$ which sends an element of the biggest orbit of $\bar{X}$ to an element of the biggest orbit of $Z$. 

Now, $\mathfrak{sl}(3)$ being viewed as a submodule of $V\otimes V^\vee$, the composition of morphisms of $\mathfrak{sl}(3)$-modules denoted by $\Psi$,
\[ 
\bigwedge^3 \mathfrak{sl}(3) \rightarrow  V \otimes V\otimes V\otimes V^\vee \otimes V^\vee \otimes V^\vee \rightarrow V\otimes V \otimes V^\vee \otimes V^\vee \rightarrow S^2 V\otimes S^2 V^\vee,
\]
has a restriction to $\co\oplus \Gamma_{2\rho} \rightarrow \co\oplus \Gamma_{2\rho}$ which is an isomorphism.

As $q=e_1^2+e_2^2+e_3^2$ is a non-degenerate conic on $V$, it gives $q\otimes q^\vee =(e_1^2+e_2^2+e_3^2)\otimes ((e_1^\vee)^2+(e_2^\vee)^2+(e_3^\vee)^2)$ a point of the open orbit of $Z$, and $\mathfrak{so}(q)$, a point of the open orbit of $\bar{X}$, seen in $\bigwedge^3 \mathfrak{sl}(3)$ as $(e_1\otimes e_2^\vee-e_2\otimes e_1^\vee)\wedge(e_2\otimes e_3^\vee-e_3\otimes e_2^\vee)\wedge(e_1\otimes e_3^\vee-e_3\otimes e_1^\vee)$. The last morphism sends the point of $\bar{X}$ to $q\otimes  q^\vee + z^2$, $z=e_1\otimes e_1^\vee+e_2\otimes e_2^\vee+e_3\otimes e_3^\vee$. Moreover, the component of $q\otimes q^\vee$ on the factor $\co$ in $\Gamma_{2\rho}\oplus \co$ is $1/2 z^2$, so $\phi=\mathrm{id}_{\Gamma_{27}}+\frac{1}{3} \mathrm{id}_{\co}$, the automorphism of $\Gamma_{2\rho}\oplus \co$ sends $q\otimes  q^\vee + z^2$ to $q\otimes  q^\vee$. Finally, the composition map $\phi \, \circ \, \Psi$ is a $PGL(3)$-equivariant automorphism of $\mathbb{P}(\Gamma_{2\rho}\oplus \co)$  which sends $\bar{X}$ to $Z$.
\end{proof} 

\textit{Remark}.  In fact, we have the following resolution: 
\begin{multline*}
0\rightarrow \mathcal{O}_{\mathbb{G}(2,\mathfrak{sl}(3))}(-6)=\bigwedge^{10} \bigwedge^3 K \rightarrow \bigwedge^{9}\bigwedge^3 K \rightarrow \cdots \rightarrow \bigwedge^{2}\bigwedge^3 K  \\ \rightarrow \bigwedge^3 K \rightarrow I_{\bar{X}} \rightarrow 0.
\end{multline*}
Since $\mathrm{codim}\, \bar{X}=\rk \bigwedge^3 K=10$, the resolution is exact. This sequence could help us to compute with Schur functors the cone of the wonderful compactification.

\subsection{The case $\Sp $}

Let $V$ be the irreducible representation of $\Sp$ of dimension $4$. We describe the elements of $\Sp$ in block form in the decomposition $V=U\oplus U^\vee$ ($U$ being a vector subspace of dimension $2$):
\[
\begin{pmatrix}
u & v \\
w &  - ^t u 
\end{pmatrix}
\]
with $u\in \mathrm{Hom}(U,U)$, and $v \in \mathrm{Hom}(U^\vee,U)$, $w\in \mathrm{Hom}(U,U^\vee)$ are symmetric. Let \[
S=\begin{pmatrix}
\mathrm{id}_V & 0 \\
0 &  - \mathrm{id}_{V^\vee} 
\end{pmatrix},
\]
then $\sigma \, : \, M \rightarrow SMS$ is an involution of $\Sp $ and for its adjoint group $PSp(4)$. Therefore $\Sp^\sigma \simeq U\otimes U^\vee =\mathfrak{gl}(U)$, and the wonderful compactification of the corresponding symmetric space $PSP(4)/GL(U)$ is of rank 2. We use the variety $Y$ introduced in Section \ref{section2} to describe its wonderful compactification.

\medskip

Let $W$ be the irreducible representation of $\mathfrak{so}(5)$ of dimension $5$. Recall that $\bigwedge^2 W \simeq \mathfrak{so}(5) \simeq \Sp \simeq S^2 V$, and $\bigwedge^2 V = W\oplus \co$. We denote $v \, : \, \mathbb{P}(V) \rightarrow \mathbb{P}(S^2 V)$ the Veronese embedding and $\mathcal{V}=v(\mathbb{P}(V))$, $\mathcal{V}\subset G(2,W)$ the grassmannian variety of plans in $W$. 

\begin{thm} \label{blowup}
The wonderful compatification of $\mathfrak{sp}(4)$ with maximal rank is isomorphic to $\tilde{G}$, the blowing up of the grassmannian $G(2,W)$ along the Veronese $\mathcal{V}$. 
\end{thm}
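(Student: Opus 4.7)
The plan is to construct a $PSp(4)$-equivariant isomorphism $\tilde{G} \xrightarrow{\sim} \bar{X}$, where $\bar{X}$ is realized as the variety $Y$ of maximal nullspaces of $(\Sp,w)$ via Sections \ref{section2}--\ref{section4}. Throughout I use the identifications $\Sp \simeq \mathfrak{so}(W) \simeq \bigwedge^2 W$ recalled above.

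First, I would define a rational map
\[
\phi : G(2,W) \dashrightarrow \bar{X}, \qquad P \longmapsto \bigwedge\nolimits^2 P \oplus \bigwedge\nolimits^2 P^\perp \subset \bigwedge\nolimits^2 W = \Sp.
\]
For $P$ non-degenerate, $W = P \oplus P^\perp$ and this expression equals $\mathfrak{so}(P) \oplus \mathfrak{so}(P^\perp)$, the fixed-point subalgebra of the involution acting as $+\mathrm{id}$ on $P$ and $-\mathrm{id}$ on $P^\perp$. Hence $\phi(P)$ is conjugate to $\mathfrak{gl}(U)$ and lies in the open $PSp(4)$-orbit of $\bar{X}$. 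The sum stays of total dimension $4$ as long as $\bigwedge^2 P \not\subset \bigwedge^2 P^\perp$, equivalently $P \not\subset P^\perp$, i.e.\ $P \notin \mathcal{V}$. So $\phi$ is a morphism on $G(2,W) \smallsetminus \mathcal{V}$ that lands in $\bar{X}$, by closedness of $\bar{X}$ in $\Gr(4,\Sp)$.

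Next, a local coordinate computation near $P_0 \in \mathcal{V}$ should show that the scheme-theoretic base locus of $\phi$ is reduced and equals $\mathcal{V}$: the rank drop of $\bigwedge^2 P + \bigwedge^2 P^\perp$ is governed by a single linear form in the normal directions to $\mathcal{V}$. The universal property of blowing up then produces an extension $\tilde{\phi} : \tilde{G} \to \bar{X}$; equivalently, $\tilde{G}$ is the closure of the graph of $\phi$ in $G(2,W) \times \bar{X}$ and $\tilde{\phi}$ is its second projection.

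Finally, I would prove $\tilde{\phi}$ is an isomorphism by comparing orbit structures. Both varieties are smooth projective of dimension $6$ (using Theorem \ref{smooth} for $\bar{X}$, and the fact that $\tilde{G}$ is the blowup of a smooth variety along a smooth subvariety), and $\tilde{\phi}$ is $PSp(4)$-equivariant. On $\tilde{G}$ there are four $PSp(4)$-orbits: the open set $G(2,W) \smallsetminus D$ of non-degenerate planes, the strict transform of the rank-one locus $D' \smallsetminus E$, the complement $E \smallsetminus (E \cap D')$ in the exceptional divisor, and the intersection $E \cap D'$. These match the four orbit closures of $\bar{X}$ indexed by subsets of $\Delta = \{\alpha_1,\alpha_2\}$ via Section \ref{section3bis}, and $\tilde{\phi}$ is a bijection on the open orbit by the explicit formula. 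The main obstacle will be injectivity of $\tilde{\phi}$ on the exceptional divisor $E$: identifying $N_{\mathcal{V}/G(2,W)}$ as a $PSp(4)$-equivariant bundle on $\mathcal{V} \simeq \mathbb{P}(V)$ and verifying that distinct directions in $\mathbb{P}(N_{\mathcal{V}/G(2,W),\,P_0}) = E|_{P_0}$ give distinct limiting $4$-dimensional subspaces of $\Sp$. Once this injectivity is granted, equivariance forces bijectivity on the remaining boundary orbits, and Zariski's main theorem yields the desired isomorphism between the smooth projective varieties $\tilde{G}$ and $\bar{X}$.
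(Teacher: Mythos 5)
Your approach is genuinely different from the paper's and, if completed, would be more geometric: you propose a direct birational correspondence $P \mapsto \bigwedge^2 P \oplus \bigwedge^2 P^\perp$ between $G(2,W)$ and $\bar{X}$ and resolve it by blowing up $\mathcal{V}$, whereas the paper computes $\mathrm{H}^0(I_{\mathcal{V}}(3)) \simeq \Gamma_{2,2}\oplus\Sp$, shows a $PSp(4)$-equivariant embedding of both $\tilde{G}$ and $Y$ into $\mathbb{P}(\Gamma_{2,2}\oplus\Sp)$, and matches up distinguished $GL(U)$-fixed points by a rescaling $\alpha\,\mathrm{id}_{\Gamma_{2,2}}+\beta\,\mathrm{id}_{\Sp}$. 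Your formula, the identification $\bigwedge^2 P \oplus \bigwedge^2 P^\perp = \mathfrak{so}(P)\oplus\mathfrak{so}(P^\perp)\simeq\mathfrak{gl}(U)$, and the closure argument placing $\phi(P)$ in $\bar{X}$ for degenerate but non-isotropic $P$ are all sound.

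However, the proposal leaves the two load-bearing steps as unverified claims. (1) To conclude that the resolution of $\phi$ is exactly $\tilde{G}$, you must show that the scheme-theoretic indeterminacy locus of $\phi$ is the \emph{reduced} Veronese $\mathcal{V}$ (or at least that a single blowup along $\mathcal{V}$ principalizes it); you announce a local computation ``governed by a single linear form'' but do not carry it out, and the order of vanishing of the relevant Plücker minors of $\bigwedge^2 P + \bigwedge^2 P^\perp$ along $\mathcal{V}$ in the normal directions is not a priori one. (2) More seriously, you explicitly flag injectivity of $\tilde{\phi}$ on the exceptional divisor $E$ as ``the main obstacle'' without establishing it, and matching orbit counts does not by itself preclude $\tilde{\phi}$ from contracting $E$ or $D'$; equivariance alone gives only that fibers over an orbit are homogeneous, not that they are points. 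One way to close this gap that fits your strategy: since $\tilde{\phi}$ is proper and birational with smooth target (Theorem \ref{smooth}), its exceptional locus, if nonempty, is purely divisorial, hence a union of the two irreducible $G$-stable divisors $E$ and $D'$; but the boundary $\bar{X}\smallsetminus G\cdot\g^\sigma$ consists of exactly two irreducible hypersurfaces and $\tilde{\phi}$ is surjective, so neither $E$ nor $D'$ can be contracted, forcing $\tilde{\phi}$ to be finite and hence an isomorphism by Zariski. As written, though, the proof is incomplete on both points.
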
 

Properties of blowing up show that $\tilde{G}$ is smooth and $\dim \tilde{G} = \dim  G(2,W) =6$. The main idea is to embed our two smooth varieties in the same projective spaces $\mathbb{P}(\Gamma_{2\rho}\oplus \Sp)$, that, we can find an automorphism $G$-invariant of $\mathbb{P}(\Gamma_{2\rho}\oplus \Sp)$ which sends one of them to the other one. Denote $\Gamma_{a,b}$  the irreducible representation of $\Sp$ with highest weight $a\omega_1+b\omega_2$ ($\omega_1$, $\omega_2$ are fundamental weights), for example $\Gamma_{2\rho}=\Gamma_{2,2}$.

\medskip

\textit{Remark}. Thanks to Lemma \ref{exactseq}, recall the exact sequence of $\Sp$-modules:
\[
0 \rightarrow \bigwedge^9 \Sp \stackrel{\delta}{\longrightarrow} \bigwedge^{6} \Sp / \Gamma_{2,2} \stackrel{\delta}{\longrightarrow} \bigwedge^3 \Sp \stackrel{\delta}{\longrightarrow} \co \rightarrow 0.
\]
For the proof of Theorem \ref{blowup}, we need the decomposition in irreducible representations of each term:
\begin{align*}
\bigwedge^6 \Sp = \Gamma_{2,2}\oplus \Gamma_{4,0}\oplus \Gamma_{0,3}\oplus \Gamma_{2,1}\oplus \Gamma_{0,2} \oplus \Gamma_{0,1} \oplus \Sp, \\
\bigwedge^3 \Sp = \Gamma_{4,0}\oplus \Gamma_{0,3}\oplus \Gamma_{2,1}\oplus \Gamma_{0,2}\oplus \Gamma_{0,1} \oplus \co.  
\end{align*}  

\begin{proof}[Proof of Theorem \ref{blowup}.]

\textbf{For the wonderful compactification.} Choose an element $U$ in the open orbit of the variety $Y$ and let $u$ be a representive of $U$ in $\bigwedge^6 \Sp$. Clearly $\delta(u)=0$, so thanks to the previous remark, $u$ is an element of $\Gamma_{2,2}\oplus\Sp$ so $Y\subset \mathbb{P}(\Gamma_{2,2}\oplus \Sp)$. If $c$ is the universal Casimir element of $\Sp$, $u$ and $c.u$ are independant (choose a suitable bases of $\Sp$ and compute the two elements), so $u$ does not lie in one irreducible representation ($c$ acts by a scalar on each irreducible representation).

\textbf{For the blowing up $\tilde G$}. As $GL(W)$-module, $\mathrm{H}^0(\mathcal{O}_{G(2,W)}(3))$ is isomorphic to the irreducible representation with partition $(3,3)$ (see Proposition 3.14 in \cite{We}). So using branching formulae in \cite{FH}, we have the decomposition on irreducible $\Sp $-modules $\mathrm{H}^0(\mathcal{O}_{G(2,W)}(3))=\Gamma_{6,0}\oplus \Gamma_{2,0}\oplus \Gamma_{2,2}$. Now, we use the exact sequence:
\[
0\rightarrow I_\mathcal{V}(3) \rightarrow \mathcal{O}_{G(2,W)}(3) \rightarrow \mathcal{O}_{\mathcal{V}}(3) \rightarrow 0,
\] 
where $I_\mathcal{V}$ is the sheaf of ideals which defines the Veronese in the grassmannian $G(2,W)$, and so,
\[
0\rightarrow \mathrm{H}^0 (I_\mathcal{V}(3)) \rightarrow \mathrm{H}^0(\mathcal{O}_{G(2,W)}(3)) \rightarrow  \mathrm{H}^0 (\mathcal{O}_{\mathcal{V}}(3)) \rightarrow \mathrm{H}^1 (I_{\mathcal{V}}(3)) \rightarrow  \cdots.
\]
Recall that $\mathrm{H}^0 (\mathcal{O}_\mathcal{V}(3))\simeq \mathrm{H}^0 (\mathcal{O}_{\mathbb{P}(V)}(6))=S^6 V= \Gamma_{6,0}$, so $\mathrm{H}^0 ( I_\mathcal{V}(3))=\Gamma_{2,2}\oplus \Gamma_{2,0}$. 
\\
The pullback of $I_\mathcal{V}(3)$ is a very ample sheaf of $\tilde{G}$. Indeed, denote by $Q_1$ the quotient sheaf of the grassmannian $G(2,W)$. The morphism $S^2 Q_1(1)\simeq S^2 Q_1^\vee(3) \rightarrow I_{\mathcal{V}}(3)$ is surjective, so $\mathcal{O}_{\mathrm{Proj}(I_\mathcal{V}(3))}(1)$ is the restriction of the very ample sheaf $\mathcal{O}_{\mathrm{Proj}(S^2 Q_1(1))}(1)$ to $\tilde{G}$.
\\
To conclude, $\tilde{G}$ is a subvariety of $\mathbb{P} \left( \mathrm{H}^0(I_\mathcal{V}(3)) \right) =\mathbb{P}(\Gamma_{2,2}\oplus \Sp ) $. 

\indent \textbf{The isomorphism}. The Veronese $\mathcal{V}$ and $G(2,W)$ are $G$-stable, so is $\tilde{G}$. The fact $V\simeq U\oplus U^\vee $ induces that $W \simeq \bigwedge^2 U \oplus \bigwedge^2 U^\vee \oplus \mathfrak{sl}(U)$. It follows that $\bigwedge^2 U \oplus \bigwedge^2 U^\vee$ is an element of the open orbit of $G(2,W)$, which means that its intersection with its orthogonal is reduced to zero, coming for a unique point of $\tilde{G}$. Denote by $[x]$ this point in $\mathbb{P}(\Gamma_{2,2}\oplus \Sp)$ which is invariant under the action of $GL(U)$.

Since $GL(2)\simeq G^\sigma$, $ \mathfrak{gl}(U) \subset \Sp $ is a point of the open orbit of $\bar{X}$, so $\mathfrak{gl}(U)^\bot$ is a point of the open orbit of $Y$, invariant under the action of $GL(U)$. Denote by $[y]$ this point in $\mathbb{P}(\Gamma_{2,2}\oplus \Sp)$.

The two points $x$ and $y$ have components only on the $GL(U)$ trivial factor of $\Gamma_{2,2}\oplus \Sp $: there is one trivial factor in $\Sp$; using Lemma \ref{exactseq}, and decomposing each space in irreducible $GL(U)$-module, we show that $\Gamma_{2,2}$ has another one. Now, $x$ and $y$ have non zero components on these two trivial factors (if it is not the case, $\tilde{G}$ or $Y$ can be embedded in some smaller $G$-stable projective space, that is to say $\mathbb{P}(\Gamma_{2,2})$ or $\mathbb{P}(\Sp)$. We also find two non zero complex numbers $\alpha$ and $\beta$ such that $\phi=\alpha \mathrm{id}_{\Gamma_{2,2}}+\beta \mathrm{id}_{\Sp}$ sends $x$ to $y$. The morphism $\phi$ is a $G$-equivariant automorphism of $\mathbb {P}(\Sp\oplus \Gamma_{2,2})$. Finally, $\tilde{G}$ and $Y$ are isomorphic.   
\end{proof}

\medskip

\textit{Acknowledgement.} Laurent Gruson is at the origin of this work; the author would like to thank him for his helpful remarks and advices.

\def\Dbar{\leavevmode\lower.6ex\hbox to 0pt{\hskip-.23ex \accent"16\hss}D}

\end{document}